\date{\today}
\keywords{}
\author{Romain Dujardin}
\thanks{Research  partially supported by ANR project LAMBDA,  ANR-13-BS01-0002.
}
\title{A non-laminar dynamical Green current}
\address{LAMA  \\
Universit\'e Paris-Est Marne-la-Vall\'ee  \\
5 boulevard Descartes \\
77454 Champs sur Marne  \\
France}
\email{romain.dujardin@u-pem.fr}
\subjclass[2000]{37F50, 37F10}
\newcommand{\cc}{\mathbb{C}}
\newcommand{\pp}{\mathbb{P}}
\newcommand{\cv}{\rightarrow}
\newcommand{\fr}{\partial}
\newcommand{\om}{\Omega}
\newcommand{\set}[1]{\left\{#1\right\}}
\newcommand{\norm}[1]{\left\Vert#1\right\Vert}
\newcommand{\abs}[1]{\left\vert#1\right\vert}
\newcommand{\cd}{{\cc^2}}
\newcommand{\pd}{{\mathbb{P}^2}}
\newcommand{\pk}{{\mathbb{P}^k}}
\newcommand{\rest}[1]{ \arrowvert_{#1}}
\newcommand{\unsur}[1]{\frac{1}{#1}}
\newcommand{\el}{\mathcal{L}}
\newcommand{\lrpar}[1]{\left(#1\right)}
\newcommand{\itm}{\item[-]}
\DeclareMathOperator{\supp}{Supp}
\DeclareMathOperator{\dist}{dist}
\newtheorem{prop}{Proposition} [section]
\newtheorem{thm}[prop] {Theorem}
\newtheorem{lem}[prop] {Lemma}
\newtheorem{cor}[prop]{Corollary}
\theoremstyle{remark}
\begin{document}
\selectlanguage{english}

\begin{abstract} 
A holomorphic endomorphism $f$
of $\mathbb{CP}^2$ admits a Julia set $J_1$, defined as usual to be  the locus of non-normality of its iterates $(f^n)_{n\geq 0}$, 
and a (typically) smaller Julia set $J_2$, which is essentially the closure of the set of repelling periodic orbits. 
The question has been raised whether 
$J_1\setminus J_2$ is filled   (possibly in a measure-theoretic sense) with 
 ``Fatou subvarieties" along which the dynamics is locally equicontinuous. 
In this article we construct examples showing that this is not the case in general. 

\end{abstract}

\maketitle

\section{Introduction}

\subsection{Background}
Let $f$ be a holomorphic endomorphism of $\mathbb{CP}^2$, of degree $d\geq 2$. 
The Julia set $J_1$ (or simply  $J$) of $f$ is classically defined as the locus where the iterates $(f^n)_{n\geq 0}$ do not locally form a normal family. 
Contrary  to the one-dimensional case, the closure of the set of repelling periodic orbits is typically smaller than $J_1$. 
For instance, for the endomorphisms induced by polynomial mappings on $\cd$,   
   the Julia set is unbounded in $\cd$ while repelling periodic orbits stay in a compact subset. 
 
 Let $J_2$ be the support of the so-called 
 {\em equilibrium measure} of $f$, which is the unique measure of maximal entropy.
 Repelling periodic orbits are dense in  $J_2$ (see Briend-Duval \cite{briend duval}). 
 It turns out that $J_2$ is   better  behaved as an analogue of the classical Julia set than the 
   closure of repelling orbits, which may have isolated points (see Hubbard-Papadopol \cite[p. 345]{hp}).     
 
 Let  $T$   be the dynamical Green current  of $f$, defined by $T = \lim_{n\cv\infty} d^{-n} [f^{-n}(L)]$, 
 where $L\subset \pd$ is a generic line. 
The Julia set  $J_1$ coincides with  $\supp(T)$, and the self-intersection measure
$ T\wedge T$ is the  measure of maximal entropy of $f$ (see Forn\ae ss-Sibony \cite{fs2}).

 \medskip
 
 It has been a long standing problem in higher dimensional holomorphic dynamics to describe the structure of $J_1\setminus J_2$.
  A popular picture is that $J_1\setminus J_2$ should be ``foliated" (in some appropriate sense) by holomorphic disks $D$ along which 
  $(f^n\rest{D})_{n\geq 0}$ is a normal family. This happens to be the case in all the examples which have been analyzed so far. 
  Such disks will be referred to as {\em Fatou disks}. For instance, 
  in \cite{fs2}, Forn\ae ss and Sibony 
  define a set $J'_2$ (actually denoted by $J_1$ in their paper) as follows: $x$ belongs to $(J'_2)^c$ 
  if there exists a neighborhood $N\ni x$ such that for every $y\in N$ there is a germ of Fatou disk through $y$.   
  They show that $J_2\subset J'_2$ and ask whether   equality holds. 
 
 \medskip
 
In \cite{fatou}, we gave the following  picture of the infinitesimal dynamics on $J_1\setminus J_2$:
for $\sigma_T$-a.e. $x\in J_1\setminus J_2$ ($\sigma_T$ is the trace measure of $T$), 
there exists a {\em Fatou direction} $\mathcal{F}_x$ in 
$T_x\pd$ along which $df^n$ is not expanding, while $df^n$ expands exponentially in the remaining 
 directions. The question of the integrability of this 
  field of Fatou directions   into a field of Fatou disks was left open in that paper. 

 \medskip

A related question is whether the Green current $T$ is {\em laminar} in $J_1\setminus J_2$. Recall that a positive current in 
$\om\subset \cd$ is  {laminar} if it expresses as an integral of integration currents over a measurable  family of compatible holomorphic 
disks (compatible here means that these disks have no isolated intersections). These disks are then automatically of 
Fatou type \cite{fs2}.
The reader is referred to  \cite{bls, structure} for basics on laminar currents. 
In his thesis De Thélin came quite close to a positive answer to this question (see \cite{dt-lamin, dt-genre}). First, 
he showed that $T$ is laminar on $J_1\setminus J_2$ when $f$ is post-critically finite. 
In the general case, he proved the following two results:
\begin{itemize}
\itm If $(C_n)$ is a sequence of complex submanifolds (i.e. curves)  
in some open set $\om\subset \cd$ such that $\mathrm{Area}(C_n)  = d^n$ and $\mathrm{genus}(C_n) = O(d^n)$, then any cluster limit of the sequence of currents $d^{-n}[C_n]$ is laminar in $\om$. 
\itm Under a generic condition on $f$ and $L$, if $\om\subset \pd$ is an open set such that $\overline \om$ is disjoint from $J_2$, then 
\begin{equation}\label{eq:genre}
\limsup_{n\cv\infty}\unsur{n}\log \mathrm{genus}(f^{-n}(L)\cap \om) \leq \log d
\end{equation}  (recall that $f^{-n}(L)$ is smooth for generic $L$ and that 
 $\mathrm{Area}(f^{-n}(L)) = d^n$).
 \end{itemize} 
Altogether, this  appears  as   strong  evidence in favor of  the laminarity of $T$ outside $J_2$. 

\medskip

The main result in  this paper is that, somewhat    surprisingly,  the answer to the laminarity problem is ``no" in general. We construct explicit 
examples of endomorphisms of $\pd$ (actually polynomial on $\cd$) such that the Green current is {\em not} laminar outside $J_2$, even in a 
very weak sense. In particular this shows that the estimate in  \eqref{eq:genre} cannot be improved to   $O(d^n)$.
For these examples, we also have that $J_2\neq J'_2$, thereby solving  by the negative
the question of Forn\ae ss and Sibony.

\subsection{Setting and main result}
Let $f(z,w) = (p(z), q(z,w))$ be a polynomial skew product in $\cd$  
It is convenient to view the second component as a non-autonomous dynamical system on $\cc$, denoted by $q_z(\cdot)$, so that 
$$f^n(z,w)  = (p^n(z), q_{z_{n-1}}\circ    \cdots \circ   q_{ z} (w)), \text{ where } z_j = p^j(z).$$ We let 
$q^n _z  =  q_{z_{n-1}}\circ    \cdots \circ   q_{ z} $. The reader is referred to Jonsson \cite{jonsson} for basics  on  the 
 dynamics of polynomial skew products in $\cd$  (see also Sester \cite{sester}). 

We assume that 
$p(z) = z^d+ {\rm l.o.t.}$ and $q(z,w) = w^d+ {\rm l.o.t.}$,  so that $f$ extends to a holomorphic self-mapping on $\pd$. Such polynomial mappings will be referred to as {\em regular}. 
We note that in this setting, the Green current was shown to be laminar in the basin of the line at infinity by Bedford and Jonsson \cite{bj}. 
Our methods require that $d\geq 3$, 
conjecturally one should be able to treat the case $d=2$ using similar  ideas.  We assume that 
$p$ admits a linearizable irrational fixed point at $z=0$, that is,  $p(z) = e^{2\pi i\theta} z+ \cdots + z^d$, where the rotation number 
$\theta$ satisfies the Brjuno condition ($\theta = (\sqrt5-1)/2$ would do). Then $p$ 
admits a Siegel disk $\Delta$ centered at the origin.   

Since $f$ is polynomial on $\cd$    its
 dynamical Green function is defined by   $$G(z,w)  = \lim_{n\cv\infty}   {d^{-n}}\log^+ \norm{f^n(z,w)},$$ and the 
Green current $T$  satisfies $T=dd^cG$. From now on we simply  denote the Julia set by $J$ (recall that  $J = \supp(T)$). 
In every vertical fiber $\set{z}\times \cc$ 
we can define a vertical filled Julia set $K_z$ as the set of points $(z,w)$ where the sequence $(q_z^n(w))_{n\geq 1}$ is bounded. 
The vertical Julia set is by definition
$J_z= \fr K_z$, and it coincides with the locus of non-normality of $(q^n_z)_{n\geq 1}$. 
Notice that the sets $K_z$ are locally uniformly bounded. 
For polynomial skew-products, $J_2$ can be characterized as being
the closure of the set of repelling periodic orbits \cite[\S 4]{jonsson}, and $J_2 = \overline{\bigcup_{z\in J_p} J_z}$. 
Therefore in our situation,   $J\cap (\Delta \times \cc)$ is disjoint from $J_2$. In particular $T\wedge T =0$ in $\Delta\times \cc$. 

\medskip

The following set of assumptions on an endomorphism $f$ will be denoted by (A): 
\begin{itemize}
\item[(A1)] There  exists an invariant  hyperbolic  compact and connected subset $E_0\subset J(q_0)$, relative to $q_0$ (not reduced to a point).
\item[(A2)] There exists a critical point $c$ for $q_0$ and an integer $k$ such that $c\notin E_0$ and 
$q_0^k(c)$ is a repelling periodic point $m$ belonging to 
$E_0$.
\item[(A3)] There is a   local  component $V$ of the critical set 
$\mathrm{Crit }(f)$ at $(0,c)$ such that  $f^k(V)$ is smooth at $(0,m)$ and not periodic.
\end{itemize}

In Proposition \ref{prop:existence} below we show that such a mapping exists for every $d\geq 3$.

\medskip


 We say that a  closed  positive 
current in $\om\subset \cd$  is {\em quasi-laminar}
 if for $\sigma_T$-a.e. $x$, there exists a germ of holomorphic disk $D$ containing $x$ such 
that  $u\rest{D}$ is harmonic, where $u$ is any local potential for $T$. If $T$ is the Green current for an endomorphism of $\pd$, 
the harmonicity of its  potential $G$ 
 on $D$ is equivalent to the equicontinuity of $f^n$ along $D$ \cite{fs2}. Therefore, the quasi-laminarity of 
$T$ in some open set means that the field of Fatou directions is integrable  (in a measure-theoretic sense) 
to a field of Fatou disks.

 Here is the precise statement  of our main theorem.

\begin{thm}\label{thm:non quasi laminar}
Let $f$ be a regular polynomial skew product in $\cd$ of the form 
$f(z,w)  = (p(z), q_z(w))$,  where $p(z) = e^{2\pi i\theta} z+ \cdots +z^d$ has a linearizable fixed point at the origin, and  
satisfying  the above    assumptions (A).
Then there exists a neighborhood $N$ of 0 such that 
the  Green current $T$ is not   quasi-laminar  in any  open subset of $N\times \cc$.
\end{thm}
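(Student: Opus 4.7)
The plan is to argue by contradiction: assume $T$ is quasi-laminar in some open subset $\Omega\subset N\times\cc$ and derive an obstruction from (A1)--(A3).

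The first step is to pin down the Fatou direction on $\Omega$. Since $p$ has zero Lyapunov exponent on $\Delta$ (Siegel behaviour) while the vertical Lyapunov exponent of $f$ is strictly positive $\sigma_T$-a.e.\ on $\supp T\cap(\Delta\times\cc)$ (the non-autonomous fiber dynamics $q^n_z$ expands on the generic support of the fiber equilibrium measures), the analysis of \cite{fatou} forces the Fatou direction $\mathcal{F}_x$ to be transverse to the vertical for $\sigma_T$-a.e.\ $x\in\Omega$. Every hypothetical Fatou disk is then a local holomorphic graph $\{(z,\phi(z)):z\in U\}$ over some $U\subset\Delta$. In parallel, (A1) together with standard non-autonomous hyperbolic persistence produces a holomorphic motion $h_z:E_0\to\cc$ over a neighborhood $\Delta_0\ni 0$: each $e\in E_0$ determines an $f$-invariant holomorphic graph $\Gamma_e=\{(z,h_z(e)):z\in\Delta_0\}$, and $\{\Gamma_e\}_{e\in E_0}$ forms a ``hyperbolic web'' of genuine Fatou disks; the periodic point $m\in E_0$ gives the graph $\Gamma_m$ tangent at $(0,m)$ to the horizontal eigendirection of $df^N|_{(0,m)}$.

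The second step is a rigidity statement: a Fatou graph in $\Omega$ that meets $\Gamma_e$ at some $(z_0,h_{z_0}(e))$ must coincide locally with $\Gamma_e$. Writing $\phi(z)=h_z(e)+\psi(z)$ and pulling back by $f^n$ along the hyperbolic orbit of $e$, the vertical expansion multiplies $\psi$ by an exponentially growing factor, which is incompatible with equicontinuity of $(f^n|_D)_n$ unless $\psi\equiv 0$. Consequently, up to $\sigma_T$-null sets, the quasi-lamination in $\Omega$ must coincide with the hyperbolic web over the hyperbolic locus.

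Now (A2)--(A3) supply the contradiction. The curve $f^k(V)$ is a smooth holomorphic disk through $(0,m)$ which by assumption is not periodic, so $f^k(V)\neq\Gamma_m$; the rigidity above then excludes $f^k(V)$ from being a Fatou disk. Iterating $f^{jN}$ produces infinitely many distinct smooth disks $f^{k+jN}(V)$ through $(0,m)$ whose tangents accumulate on the vertical unstable direction, none of which is a Fatou disk. To close the argument this pointwise obstruction must be upgraded to a $\sigma_T$-positive-measure statement: using the representation $T=\lim_{n\to\infty}d^{-n}[f^{-n}(L)]$, one tracks the pre-images of the critical disks under $f^{-n}$ and exploits the quasi-periodic rotation of $p$ on $\Delta$ to spread the transverse non-Fatou behaviour over a neighborhood of $\Gamma_m$ of positive $\sigma_T$-mass, carefully accounting for the contribution of the critical set of $f^n$ to the branching of $f^{-n}(L)$. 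The resulting positive $\sigma_T$-measure of points admitting no Fatou disk contradicts the assumed quasi-laminarity. The main technical difficulty will be precisely this last step: turning the pointwise transverse intersection of $f^k(V)$ with $\Gamma_m$ into a genuinely measurable obstruction of positive $\sigma_T$-mass.
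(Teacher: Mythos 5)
You correctly identify several of the paper's ingredients: the Fatou direction is non-vertical $\sigma_T$-a.e.\ because of vertical expansion (the paper records this in a footnote, giving a Radon--Nikodym argument in the main text), the hyperbolic set $E_0$ admits an $f$-equivariant holomorphic motion $E$ over a neighborhood of $0$ in $\Delta$ (Lemma~\ref{lem:motion}), and a Fatou graph meeting a leaf of $E$ must coincide with that leaf by vertical expansion. Up to there the outline is sound.

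The genuine gap is the step you yourself flag as the ``main technical difficulty,'' and it is not a detail: it is the whole theorem. Your obstruction lives at the single point $(0,m)$, but the central fiber $\{z=0\}\times\cc$ has $\sigma_T$-measure zero (by slicing, $T\wedge idz\wedge d\overline z$ does not charge individual fibers), so the fact that $f^k(V)$ and its forward iterates are non-Fatou disks through $(0,m)$ does not by itself contradict quasi-laminarity, which only requires a Fatou disk at $\sigma_T$-a.e.\ point. Your proposed fix --- tracking pre-images of the critical disks in $d^{-n}[f^{-n}(L)]$ and ``spreading the transverse non-Fatou behaviour'' --- is not an argument; there is no mechanism given for converting the intersection at $(0,m)$ into a positive-trace-measure set of points with no Fatou disk. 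The paper's route is different in three essential respects. First, it exploits the \emph{connectedness} of $E_0$ (Lemma~\ref{lem:connex}) to guarantee an intersection between the postcritical curve $W=f(\mathrm{Crit}(f))$ and the web $E$ over \emph{every} $p$-invariant circle $C\subset N$, not merely at $z=0$; this is what gives positive measure, since $T\wedge idz\wedge d\overline z$ integrates the ergodic measures $\mu_C$. Second, it proves an expansivity estimate in the fibers from the mixing property of the slice measures (Lemma~\ref{lem:expansive}), and uses it to establish that the family of Fatou graphs through $\mu_z$-a.e.\ point is disjoint and unique (Lemmas~\ref{lem:disjoint} and~\ref{lem:unique}), hence a genuine lamination. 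Third --- and this is the decisive idea you are missing --- instead of looking at the forward image $f^k(V)$, the paper looks at the \emph{preimage} $f^{-1}(\Gamma)$ of a leaf $\Gamma$ of $E$ meeting $W$: this preimage is itself a Fatou disk lying in the totally invariant set $J$, and because it passes through the critical point the equation $q(z,w)-\gamma(p(z))=0$ forces a vertical tangency or a singularity there, giving an \emph{isolated} intersection with $\gamma_x$; continuity of isolated intersections then produces a positive-$\mu_z$-measure set of Fatou graphs chained together through $f^{-1}(\Gamma)$, which contradicts the expansivity of Lemma~\ref{lem:expansive}. Without the expansivity lemma, the connectedness lemma, and the switch from $f^k(V)$ to $f^{-1}(\Gamma)$, the argument does not close.
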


%

Equivalently, if $\om\subset N\times \cc$ is any open subset, there exists a set $E\subset \om$ of positive trace measure such that 
for every $x\in E$, there us no Fatou disk through $x$.  

The following corollary could easily be deduced from the theorem together with  the fact that $T\wedge T = 0$ in $\Delta\times \cc$. 
We will actually give a direct proof. 

\begin{cor}\label{cor:non laminar}
Let $f$  and $N$ 
be as in Theorem \ref{thm:non quasi laminar}.  The Green current $T$ is not   laminar  in    any  open subset of $N\times \cc$.
\end{cor}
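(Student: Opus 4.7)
The plan is to argue by contradiction: suppose that $T$ is laminar on some open $\om \subset N \times \cc$, so that $T = \int [D_\alpha]\,d\mu(\alpha)$ on $\om$, with the $D_\alpha$ a measurable family of pairwise compatible holomorphic disks. The decisive input is that, in our setting, any such decomposition is made of \emph{Fatou} disks: for $\mu$-a.e.\ $\alpha$, every local potential $u$ of $T$ restricts to a harmonic function on $D_\alpha$. This is a general feature of laminar currents with vanishing self-intersection---which applies here since $T\wedge T = 0$ on $\Delta\times\cc$ by the characterization of $J_2$ recalled in the introduction---and it is exactly the observation referred to in the excerpt when it asserts that ``these disks are then automatically of Fatou type''. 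From this, the indirect argument sketched before the statement proceeds: disintegrating $\sigma_T\rest{\om} = \int \sigma_{[D_\alpha]}\,d\mu(\alpha)$ shows that for $\sigma_T$-a.e.\ $x \in \om$ a Fatou disk of the decomposition passes through $x$, so $T$ is quasi-laminar on $\om$, contradicting Theorem~\ref{thm:non quasi laminar}.

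For the promised direct proof, one adapts the construction of the bad set from Theorem~\ref{thm:non quasi laminar} so as to contradict the no-isolated-intersection axiom of laminarity itself. Under assumption~(A), the local critical component $V$ is sent by $f^k$ to a smooth analytic piece through the repelling periodic point $(0,m)\in E_0$; pulling this piece back along the hyperbolic dynamics of $E_0$ and pushing it around by the Siegel rotation on $\Delta$ yields a rich family of holomorphic curves which, through the equidistribution $d^{-n}[f^{-n}(L)]\to T$, can be realized as subpieces appearing in a suitable laminar decomposition of $T$. The target is then to show that a $\mu$-positive proportion of the disks $D_\alpha$ of the hypothetical lamination must intersect such a curve at an isolated point, which violates the compatibility condition defining laminarity of $T$.

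The main obstacle is transversality: one has to exclude the possibility that the dynamically produced curves happen to be tangent to, or contained in, a $D_\alpha$ on a large set of $\alpha$'s. This is where the interplay between the irrational Brjuno--Siegel rotation in the base (which perpetually reshuffles fiberwise directions) and the hyperbolic stretching along $E_0$ in the fiber (which uniformly expands transverse to orbits) becomes essential: together they forbid any persistent tangential alignment between the $D_\alpha$ and the iterated critical curve. The advantage over the indirect route is that producing a single pair of lamination disks with an isolated intersection already suffices, so one expects a significantly shorter argument than the full construction of a positive-$\sigma_T$-measure set of bad points needed for the non-quasi-laminarity statement.
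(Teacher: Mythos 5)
Your first paragraph gives the indirect deduction correctly: on $\Delta\times\cc$ one has $T\wedge T=0$, so the disks of any laminar decomposition of $T$ are Fatou disks, and the disintegration $\sigma_T=\int\sigma_{[D_\alpha]}\,d\mu(\alpha)$ then yields quasi-laminarity of $T$ on $\om$, contradicting Theorem~\ref{thm:non quasi laminar}. This is exactly the ``easy'' route the paper acknowledges immediately before the statement, and it is a valid (if theorem-dependent) proof of the corollary.

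Your sketched direct proof, however, is not the paper's argument and contains a real gap. The paper does \emph{not} try to realize iterates of the critical curve as disks of the hypothetical laminar decomposition; that claim is both unjustified and, in general, false. There is no reason why the $D_\alpha$'s of an arbitrary decomposition $T=\int[D_\alpha]\,d\mu(\alpha)$ should include pieces of $f^{-n}(\mathrm{Crit}(f))$, and the no-isolated-intersection (compatibility) condition in the definition of laminarity constrains only pairs of disks \emph{within the same decomposition}; an isolated intersection of some $D_\alpha$ with an external dynamically produced curve does not, by itself, contradict laminarity of $T$. The crucial step missing from your sketch is the propagation Lemma~\ref{lem:annulus}: starting from a nonzero uniformly laminar $S\le T$ made of graphs over a disk $U\subset N$ (which must exist since the disks cannot all be vertical, by Lemma~\ref{lem:julia}), the paper saturates $U$ under $p$ into an annulus $A$ and, using ergodicity of each $\mu_C$ and geometric intersection of uniformly woven currents, upgrades the statement ``$T$ is laminar'' to ``$J\cap(A\times\cc)$ is a lamination by graphs, with which every holomorphic disk contained in $J$ must be compatible.'' Only after this upgrade can one invoke Lemma~\ref{lem:connex} to find a leaf $\Gamma$ of the motion $E$ meeting $f(\mathrm{Crit}(f))$, note that $f^{-1}(\Gamma)\subset J$ by total invariance, and read off from the equation $q(z,w)-\gamma(p(z))=0$ that $f^{-1}(\Gamma)$ has a vertical tangency or a singularity at the critical point, which is incompatible with the lamination by graphs. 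The transversality discussion at the end of your sketch (Siegel rotation reshuffling directions, hyperbolic fiber expansion forbidding alignment) gestures at the right geometry but does not substitute for the propagation lemma, and as written it is not an argument one could complete.
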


The   first (non-dynamical) examples of non-laminar 
positive closed currents in $\cd$ 
with continuous potential and vanishing  self-intersection were constructed in \cite{wermer}.
Since $T\wedge T=0$  in $\Delta\times \cc$, 
$T$ can thus be considered as a kind of dynamical version of these examples. Notice 
however that, as opposed to the ``Wermer examples" of \cite{wermer},  
the Julia sets that we construct  contain many holomorphic disks. 

\medskip

Recall that the set $J'_2$ was defined to be  the set of points 
 such that  locally there does not exists a neighborhood   that is filled with germs of  Fatou disks. 
 In the particular context of polynomial skew products in $\cd$,  the question whether $J_2$ equals $J'_2$ is raised in \cite[\S 8]{jonsson}.
  The following corollary is an immediate consequence of Theorem \ref{thm:non quasi laminar}. 

\begin{cor}\label{cor:J'_2}
Let $f$ and $N$ be as in Theorem \ref{thm:non quasi laminar}.    Then $J\cap (N\times \cc)$ is contained in $J'_2$, so in particular $J_2\subsetneq J'_2$. 
\end{cor}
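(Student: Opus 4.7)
The corollary is described in the text as an immediate consequence of Theorem~\ref{thm:non quasi laminar}, so the plan is to deduce both assertions quickly from the theorem together with the facts collected in the setup, with no substantial additional work.

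First I would establish the containment $J\cap (N\times\cc)\subset J'_2$ by contraposition. Fix $x\in J\cap(N\times\cc)$ and suppose that $x\notin J'_2$. By the definition of $J'_2$ recalled in the introduction, there is then an open neighborhood $W$ of $x$, which I may shrink so that $W\subset N\times\cc$, such that every $y\in W$ admits a germ of Fatou disk through $y$. On the other hand, applying Theorem~\ref{thm:non quasi laminar} to the open set $W$, in the equivalent form stated immediately after the theorem, produces a subset $E\subset W$ of positive trace measure, no point of which carries a Fatou disk. Since $E$ is non-empty, this contradicts the previous statement and forces $x\in J'_2$.

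For the strict inclusion $J_2\subsetneq J'_2$, I would combine the Forn{\ae}ss--Sibony inclusion $J_2\subset J'_2$ recalled in the introduction with the fact (recorded in the setup) that $J\cap (\Delta\times\cc)$ is disjoint from $J_2$. To produce a point of $J'_2\setminus J_2$ it then suffices to exhibit one point of $J\cap(N\times\cc)$ whose first coordinate lies in $\Delta$: the first part of the corollary places it in $J'_2$, and the disjointness just recalled removes it from $J_2$. The origin lies in $N\cap\Delta$, the vertical Julia set $J_0$ of $q_0$ is non-empty (being the Julia set of a polynomial of degree $d\geq 3$), and any point $(0,w_0)$ with $w_0\in J_0$ belongs to $J$ because the non-normality of $q_0^n$ near $w_0$ pulls back to non-normality of $f^n$ near $(0,w_0)$. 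So $(0,w_0)$ is a witness.

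The only delicate point, and essentially the only place where anything is being used, is the non-emptiness of the set $E$ provided by Theorem~\ref{thm:non quasi laminar}; this is automatic from the fact that $E$ is required to have positive trace measure rather than merely being, say, a full-measure set of ``bad" points.
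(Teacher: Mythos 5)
Your proposal is correct and follows the same route as the paper: the containment $J\cap(N\times\cc)\subset J'_2$ is deduced by contraposition from Theorem~\ref{thm:non quasi laminar} exactly as the paper does (a neighborhood filled with Fatou disks would make $T$ quasi-laminar there), and the strict inclusion then follows from $J_2\subset J'_2$, the disjointness of $J\cap(\Delta\times\cc)$ from $J_2$, and non-emptiness of $J\cap(N\times\cc)$. You simply spell out the second assertion and the witness point $(0,w_0)$ more explicitly than the paper, which leaves those details implicit; the argument is the same.
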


Indeed, if $\om\subset N\times \cc$ was an open set intersecting $J$ with the property
 that through every point in $\om$ there exists 
 a Fatou disk, then $T$ would be quasi-laminar in $\om$, a contradiction.
 
 \medskip
 
 Theorem \ref{thm:non quasi laminar} and Corollary \ref{cor:non laminar} will be established in \S \ref{sec:proofs}. In  \S \ref{sec:further}, we discuss the higher dimensional case, and comment about a question raised by Forn\ae ss and Sibony in \cite{fs questions}.

\section{Proof of the main results}\label{sec:proofs}

\subsection{Examples} We start by exhibiting   examples satisfying the assumptions (A)  of the main theorem.

\begin{prop}\label{prop:existence}
Let $d\geq 3$ and fix $p(z)  = e^{2\pi i\theta} z+ \cdots + z^d$ as in Theorem \ref{thm:non quasi laminar}. Then there
   exists a polynomial $q(z,w)$ such that  the skew product $f(z,w)  = (p(z), q_z(w))$ satisfies (A).
   \end{prop}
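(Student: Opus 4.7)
The plan is to split the construction into two essentially independent steps: first build the one-variable polynomial $q_0 := q(0,\cdot)$ to satisfy (A1) and (A2), then choose the remaining $z$-dependent coefficients of $q$ so that (A3) also holds.

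For Step 1, the strategy is to exploit the fact that a polynomial of degree $d \geq 3$ has at least two free critical points, leaving enough flexibility to prescribe two independent pieces of combinatorics. Starting from any hyperbolic polynomial $g$ with connected Julia set (say a quadratic $g(w) = w^2 + c_0$ with $c_0$ in the main cardioid of the Mandelbrot set), I would use Douady--Hubbard polynomial-like renormalization / tuning to produce a polynomial $q_0$ of degree $d$ whose dynamics contains a connected, hyperbolic, $q_0$-invariant subset $E_0 \subset J(q_0)$ modeled on the Julia set of $g$. The remaining critical point(s) of $q_0$ would then provide enough free parameters, through a transversality / density argument for Misiurewicz-type critical relations in the sub-family respecting this polynomial-like constraint, to further tune $q_0$ so that one critical point $c$ satisfies $q_0^k(c) = m \in E_0$ for some repelling periodic $m$ and some integer $k$, while keeping $c, q_0(c), \ldots, q_0^{k-1}(c)$ off $E_0$. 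This is where I expect the main obstacle: simultaneously realizing a proper nontrivial connected hyperbolic invariant subset and a critical orbit landing on it in a single polynomial is exactly what fails when $d=2$, and is the reason for the assumption $d\geq 3$.

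For Step 2, I would write $q(z,w) = q_0(w) + z\, h(z,w)$ with $h$ a polynomial to be adjusted. Since $\det Df = p'(z)\, q_w(z,w)$ and $p'(0) = e^{2\pi i \theta}\neq 0$, the critical set of $f$ near $(0,c)$ coincides with $\{q_w = 0\}$. Arranging $c$ to be a simple critical point of $q_0$ in Step 1, the implicit function theorem gives a smooth local component $V = \{w=\varphi(z)\}$ through $(0,c)$. The image $f^k(V)$ is parametrized by $z\mapsto (p^k(z), q_z^k(\varphi(z)))$; since $(p^k)'(0) = e^{2\pi i k\theta}\neq 0$, this parametrization is an immersion at $z=0$, so $f^k(V)$ is automatically smooth at $(0,m)$. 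To rule out periodicity, let $\ell$ be the $q_0$-period of $m$, so that $f^\ell$ fixes $(0,m)$ with differential lower-triangular in the $(z,w)$ basis and distinct eigenvalues $e^{2\pi i \ell\theta}$ (modulus $1$) and $(q_0^\ell)'(m)$ (modulus $>1$). If $f^k(V)$ were invariant under some $f^{j\ell}$, its tangent direction at $(0,m)$ would have to be one of the two eigendirections of $df^{j\ell}_{(0,m)}$, giving a countable set of forbidden tangent slopes. Since the tangent slope of $f^k(V)$ at $(0,m)$ depends non-trivially (in fact affinely) on a suitable coefficient of $h$, a generic choice of $h$ avoids this countable set, establishing (A3).
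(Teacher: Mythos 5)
Your overall decomposition (first build the central fiber map $q_0$ satisfying (A1)--(A2), then choose the $z$-dependence to get (A3)) matches the paper's, and your observation that smoothness of $f^k(V)$ is automatic because $p$ is a local biholomorphism near $0$ is exactly the paper's remark. But both of your steps diverge significantly from the paper's proof, and each has a gap.

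\emph{Step 1.} The paper does not use tuning/renormalization. It works inside the one-parameter family $\mathcal S$ of degree-$d$ polynomials with a superattracting fixed point of multiplicity $d-1$ at $0$, and takes $E_0 = \partial B(0)$. Roesch's theorem provides parameters $g$ for which the free critical point $c$ lies outside $\overline{B(0)}$ and $g^i(c)\in\partial B(0)$; at such parameters $g$ has no parabolic cycles and no recurrent critical point, so Ma\~n\'e's theorem gives hyperbolicity of $\partial B(0)$, and a small perturbation inside $\mathcal S$ puts $g^i(c)$ on a repelling periodic point. Your route via Douady--Hubbard tuning plus a density argument for Misiurewicz critical relations ``in the sub-family respecting the polynomial-like constraint'' is plausible in outline, but as stated it is a sketch: you would need to produce the relevant one-dimensional slice, prove the hyperbolicity of the tuned copy of $J(g)$, and establish that Misiurewicz parameters landing on $E_0$ exist (and can be chosen so that the critical orbit stays off $E_0$ before landing). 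None of this is carried out, and you yourself flag it as the main obstacle. So Step 1 as written is not a proof.

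\emph{Step 2.} Your tangent-direction transversality argument is a genuinely different mechanism from the paper's. The paper picks a nonzero fixed point $z_1$ of $p$ and uses Green-function estimates from \cite{preper} to show that for $|\beta|$ large the critical point $c$ escapes under $q_{z_1}$, hence $\mathbb C\times\{c\}$ cannot be preperiodic; this is a global escape argument and works already in the one-parameter family $q_z=q_0+\beta z$. Your local argument is appealing but has a gap: the ``forbidden'' non-vertical eigendirection of $df^{j\ell}_{(0,m)}$ also depends on $h$, because the off-diagonal entry of $df^{j\ell}_{(0,m)}$ involves $h(0,q_0^i(m))$ for $i=0,\dots,\ell-1$. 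So ``the tangent slope depends affinely on a coefficient of $h$, hence a generic choice avoids a countable set'' is not enough: you must show that the equation ``tangent slope $=$ eigendirection slope'' is not identically satisfied in the parameter. In the constant family $h\equiv\beta$ (the paper's family) both sides are linear in $\beta$ and this non-identity would have to be checked by hand. The cleanest fix is to allow $h(0,\cdot)$ to be a polynomial in $w$: choosing $h(0,\cdot)$ to vanish on the periodic orbit $\{m,q_0(m),\dots,q_0^{\ell-1}(m)\}$ but not at $c$ (possible because $c\notin E_0$) makes the forbidden slope $0$ for all $j$ while the tangent slope of $f(V)$ equals $h(0,c)/p'(0)\neq0$, which is enough for $k=1$. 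With that modification your Step 2 becomes a correct, and arguably more elementary, alternative to the paper's escape argument. As written, though, it does not establish (A3).
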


\begin{proof}
To carry out the few computations to come, we need to find $q_0$ in a rather explicit form. For this it is convenient 
to use the work of P. Roesch \cite{roesch}. 
Assume 
that $g(w)$ is a polynomial of degree $d$ with a super-attracting point of multiplicity at least $d-1$ at the origin, and denote its immediate
basin by 
 $B(0)$. We denote by $\mathcal S$ the parameter space of 
   such mappings. Since polynomials can be parameterized by specifying their critical points, 
    $\mathcal{S}$ is one-dimensional, and parameterized by the remaining critical point (this differs from the presentation in \cite{roesch}).  
In this situation, 
 $\fr B(0)$ is a Jordan curve (we will not use this fact) and it may happen that the  free   critical point $c$ satisfies $c\notin 
\overline{B(0)}$ and $g(c)\in \fr B(0)$. Indeed consider the set $\mathcal{H}\subset \mathcal S$ of such mappings $g$ such that $g^n(c)$ 
converges to 0. Then $\mathcal H  = \bigcup_{i\geq 0} \mathcal H_i$, where $g\in \mathcal H_i$ if and only if $i$ is the first integer such 
that $g^i(c)\in B(0)$. For every $i$, $\mathcal{H}_i$ is non-empty.
 Roesch shows that if $i>0$, if $\om$ is a component of $\mathcal{H}_i$, and if  $g\in \fr\om$, then $c\notin 
\overline{ B(0)}$ and $g^i(c)\in \fr B(0)$. In particular it is clear that  $g$ admits no  parabolic periodic points neither any 
 recurrent critical 
point.    Hence from Ma\~né's Theorem \cite{mane}, the invariant compact set $\fr B(0)$ is hyperbolic, in particular it is locally persistent and repelling periodic points are dense there. Since $g^i(c)$ does not persistently belong to $\fr  B(0)$, by
 perturbing $g$ within   $\mathcal S$, we may further assume that $g^i(c)$ is a repelling periodic point.  We fix $q_0$ to be such a $g$, with, say, $i=1$, which thus satisfies (A1) and (A2). 

\medskip

It remains to check the third assumption (A3) that the component of the critical set  containing $(0, q_0(c))$ in $\cd$ is not periodic. 
In order to use some results from   \cite{preper}, 
we introduce the  parameterization of $\mathcal{S}$  given by 
$$\mathcal{S} = \set{\frac{w^d}{d} - c \frac{w^{d-1}}{d-1}, c\in \cc},$$
where $c$ is both the parameter and the free critical point. We choose $q_z(w)$ of the form 
$$q_z(w) = \frac{w^d}{d} - c \frac{w^{d-1}}{d-1} + \beta z,$$ where  $c$ is fixed so that $q_0$ is as   above, and 
$\beta\in \cc$ is a free parameter (of course then we can go back to the initial normalization 
 $q_z(w) = w^d+ \mathrm{l.o.t.}$ by  a linear conjugacy).
Let us show that   (A3) is fulfilled for generic $\beta$. 

The critical set of $f$ is  
\begin{align*}
\mathrm{Crit}(f)  &= \lrpar{ \mathrm{Crit}(p) \times \cc}  \cup \set{(z,w), \ \frac{\fr q}{\fr w} (z,w) = 0}\\  &= 
\lrpar{ \mathrm{Crit}(p) \times \cc}  \cup (\cc\times \set{0})\cup (\cc\times \set{c}),
\end{align*}
and we need to prove that $\cc\times \set{c}$ is not preperiodic for generic $\beta$. Notice that  $f^i(\Delta\times \cc)$ is smooth, 
because the image of a graph over $\Delta$ is a graph (see the proof of Lemma \ref{lem:motion}   for some details on this).

Let $z_1$ be a fixed point of $p$, distinct from 0. We work in the fixed fiber $\set{z_1}\times \cc$.
It is enough to prove that $c$ is not persistently preperiodic in this fiber. For this, we show that it escapes for large $\beta$.
To stick with the notation  of  \cite{preper},    put  $\beta z_1= a^d$. Then the restriction of $f$ to $\set{z_1}\times \cc$ becomes 
 $P_{c,a} = \frac{w^d}{d} - c \frac{w^{d-1}}{d-1} +a^d$.
We have that 
$P_{c,a}(0) = a^d$ and $P_{c,a}(c) = -\frac{c^d}{d(d-1)} + a^d$. By \cite[Prop. 6.3]{preper}, since $c$ is fixed,
$$G(c,a ) = \max \left(g_{ P_{c,a}}(0), g_{ P_{c,a}}(c)\right) = \log^+\max (\abs{a}, \abs{c}) + O(1) = \log^+\abs{a} +O(1)$$
($g_{ P_{c,a}}$ is the dynamical Green function of $P_{c,a}$).  Now by 
 \cite[Lemma 6.5]{preper}, for every $z\in \cc$ we have that 
 $$\max( g_{P_{c,a}}(z), G(c,a )) \geq \log \abs{z-\frac{c}{d-1}} -\log 4.$$ Applying this to $z = P_{c,a}(c) = a^d+O(1)$, we see that 
$$\max( g_{P_{c,a}}(P_{c,a}(c)), G(c,a ))\geq d\log \abs{a} + O(1) \text{ as } a\cv\infty.$$ Thus   when $\abs{a}$ is large, 
$g_{P_{c,a}}(P_{c,a}(c)) > G(c,a )$ and we deduce that $P_{c,a}(c)$, hence $c$, escapes, which was the desired result. 
The proof is complete.\end{proof}

\subsection{Proofs} We now prove Theorem \ref{thm:non quasi laminar} and Corollary \ref{cor:non laminar}. 
Let $f$ be as in the statement of the theorem. Replacing it by an iterate we may assume that the 
integer $k$ in (A) equals 1 and that $(0,m)$ is fixed. By convention, we only consider neighborhoods $N\times \cc$ of the central fiber such 
that $N$ is invariant under $p$, that is, $N$ is a disk in the linearizing coordinate. Throughout the proof we freely reduce $N$ without changing 
its notation. We denote by $\pi_1$  the first coordinate projection in $\cd$. 

\medskip

{\noindent \em Step 1: geometry of the Julia set.} 
We first  study the persistence of $E_0$ in the fibers close to 
$\set{0}\times \cc$. 

\begin{lem}\label{lem:motion}
There exists a neighborhood $N$ of 0 and a  $f$-equivariant    holomorphic motion $E$ of $E_0$ over $N$, that is, if $\gamma_w$ denotes the graph of the motion through $(0,w)$, we have $f(\gamma_w)  = \gamma_{q_0(w)}$. 

Furthermore, $f$ is vertically expanding along $E$, that is, there exists   $C>0$ and $\beta >1$ such that  if $(z,w)\in E$,
$\norm{df^n_{(z,w)}(0,1)}\geq C\beta^n$.  
\end{lem}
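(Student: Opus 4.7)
The plan is the standard persistence-of-hyperbolic-sets argument, adapted to the skew-product setting via a graph-transform contraction in a space of holomorphic families.

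First I would work in linearizing coordinates on the Siegel disk, so that $p(z) = e^{2\pi i \theta} z$ and $N\subset \Delta$ is a round disk with $p(N)=N$. By (A1), $E_0$ is a hyperbolic invariant set for $q_0$: there exist $\beta >1$ and a neighborhood $U$ of $E_0$ in $\cc$ on which $\abs{q_0'} \geq \beta$ and on which every inverse branch of $q_0$ landing near a given point of $E_0$ is univalent and contracting by at most $1/\beta$. By continuity, after shrinking $N$, both the derivative bound and the inverse branches persist for $q_z$ uniformly in $z\in N$.

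Next I construct the family $\{\gamma_w\}_{w\in E_0}$ as the fixed point of a graph-transform operator. Let $\mathcal{G}$ be the space of bounded maps $\Gamma: E_0\times N \to \cc$ that are holomorphic in $z$ for each fixed $w$, satisfy $\Gamma(w,0)=w$, and have $\sup \abs{\Gamma(w,z)-w}$ small, equipped with the sup norm; this is a complete metric space. Define
\begin{equation*}
(\mathcal{T} \Gamma)(w,z) = q_z^{-1}\bigl(\Gamma(q_0(w),\, p(z))\bigr),
\end{equation*}
where $q_z^{-1}$ is the unique inverse branch of $q_z$ sending a neighborhood of $q_0(w)$ to one of $w$, selected by proximity to the corresponding branch of $q_0^{-1}$. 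Because that branch is holomorphic and contracts by at most $1/\beta$, the operator $\mathcal{T}$ preserves $\mathcal{G}$ (the condition $(\mathcal{T}\Gamma)(w,0)=w$ is immediate) and is a contraction of ratio $\leq 1/\beta$. Its unique fixed point $\Gamma_*$ defines $\gamma_w(z):=\Gamma_*(w,z)$, and the fixed-point identity $q_z(\gamma_w(z)) = \gamma_{q_0(w)}(p(z))$ is exactly the claimed $f$-equivariance. Holomorphy of $z\mapsto \gamma_w(z)$ is preserved throughout the iteration and under uniform limits; injectivity of $w\mapsto \gamma_w(z)$ follows from the fact that the motion stays in a tubular neighborhood of $E_0$ that is split into disjoint plaques by the branch selection.

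The vertical expansion is then immediate: every point $(z,w)\in E$ lies in the region where $\abs{\partial_w q_z}\geq \beta'$ for some fixed $\beta' \in (1,\beta)$, again by continuity from the bound on $E_0$. Since $f$ is a skew product and the graph through $(z,w)$ is mapped to a graph in the family, the chain rule along the forward orbit gives $\norm{df^n_{(z,w)}(0,1)} = \prod_{j=0}^{n-1} \abs{\partial_w q_{p^j(z)}(q_z^j(w))}\geq (\beta')^n$, which is the claim with $C = 1$. The one technical point requiring care is the uniform and consistent choice of inverse branches over all of $E_0$; this is handled by compactness of $E_0$ and by first choosing $\varepsilon$ (the size allowed in $\mathcal{G}$) and then $N$ small enough that the relevant branches are unambiguous. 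No deeper obstacle is expected, as the argument is a direct transcription of structural stability of hyperbolic sets to the holomorphic skew-product category.
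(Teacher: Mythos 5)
Your proposal is essentially correct, but it takes a genuinely different implementation from the paper's. The paper does not set up a contraction in a function space: it constructs each graph $\gamma_{w_0}$ as the limit of the nested sequence of horizontal ``topological bidisks'' $f_{w_0}^{-1}\cdots f_{w_{n-1}}^{-1}(N\times D_{w_n})$, and obtains convergence from the contraction of the \emph{vertical Poincar\'e metric} under inclusion, in the style of Hubbard--Oberste-Vorth's crossed mappings. That route is ``soft'': it needs only that each inverse branch maps $D_{q_0(w)}$ compactly into $D_w$, with no quantitative derivative bound, and so it sidesteps the choice of an adapted metric. Your graph-transform in $\sup$-norm is the more explicit, coordinate-based cousin of the same idea, and buys a cleaner contraction ratio, but it requires the derivative bound to hold pointwise.

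This brings up the two places where your write-up, as stated, has small gaps. First, hyperbolicity of $E_0$ for $q_0$ gives $|(q_0^n)'|\geq C\lambda^n$ on $E_0$; it does \emph{not} directly give $|q_0'|\geq\beta>1$ in the Euclidean metric on a neighborhood, which is what your contraction estimate and your final chain-rule computation with $C=1$ both use. You need either to pass to an adapted (conformal) metric or to replace $q_0$ by a high iterate $q_0^{N_0}$; either fix is routine, and note that the lemma's statement already allows a multiplicative constant $C$, precisely to absorb this. Second, your justification of injectivity of $w\mapsto\gamma_w(z)$ via ``disjoint plaques by branch selection'' does not really work: $E_0$ may be a Cantor set or a Jordan curve, and the disks $D_w$ overlap for nearby $w$. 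The correct argument is the standard expansivity one: if $\gamma_w(z_0)=\gamma_{w'}(z_0)$ with $w\neq w'$, then equivariance forces $\gamma_{q_0^n(w)}(p^n(z_0))=\gamma_{q_0^n(w')}(p^n(z_0))$ for all $n$, while expansivity of $q_0|_{E_0}$ produces an $n$ with $|q_0^n(w)-q_0^n(w')|$ bounded below by a constant $\eta>0$; taking the $\sup$-norm of $\Gamma(\cdot,\cdot)-\mathrm{pr}_2$ smaller than $\eta/2$ yields a contradiction. With these two repairs your argument is complete and is a legitimate alternative to the one in the paper.
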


Notice that $E_0$ is not a hyperbolic set for $f$ in $\cd$ so we cannot simply invoke hyperbolicity here. Still, the 
argument is based on standard ideas.

\begin{proof}
Since $E_0$ is a hyperbolic set for $q_0$, 
for every $w\in E$ there exists a disk $D_w$ centered at $w$ such that $q_0$ is invertible on $D_w$ and 
 if $q_{0,w}^{-1}$ denotes the    inverse branch of $q_0$ sending $q_0(w)$ to $w$, then $q_{0,w}^{-1} (D_{q_0(w)}) \Subset D_w$. 
Hence if $N$ is a sufficiently small $p$-invariant   neighborhood of $0$, for every $(0,w)\in \set{0}\times E_0$, $f$ is invertible in 
$N\times D_w$ and if as before $f_w$ is the corresponding inverse branch,  $f_w^{-1}( N\times D_{q_0(w)}) $ is horizontally contained in 
the ``topological bidisk" 
$N\times D_w$. Recall that this means that if $L$ is any vertical line in    $N\times D_w$, $f_w^{-1}( N\times D_{q_0(w)})\cap L \Subset L$. 
Thus the situation   is similar to that of a 
crossed mapping (see Hubbard and Oberste-Vorth \cite{ho}) with the difference that since $f$ 
preserves the vertical fibers,  it maps the vertical piece of the boundary of $f_w^{-1}( N\times D_{q_0(w)})$ to the vertical boundary of $N
\times D_{q_0(w)}$, that is, 
$$f\lrpar{ \overline{f_w^{-1}( N\times D_{q_0(w)})}\cap (\fr N\times D_w)}\subset \fr N\times  D_{q_0(w)}.$$ 
In addition, $f\rest{N\times D_w}$ has degree 1 in the sense that the image of a graph over $N$ is a graph over $N$.  
Indeed write this graph as $(t, \gamma(t))$, $t\in N$,  whose image is 
$(p(t), q_t(\gamma(t))$, and observe that $p:N\cv N$ is invertible.  

\medskip

Let now $w_0\in E_0$ and consider its orbit  $(w_n)$ under   $q_0$. The sequence 
$f_{w_0} ^{-1} \cdots f_{w_{n-1}}^{-1} (N\cap D_{w_n})$ defines 
 a nested sequence of horizontal topological bidisks in $N\times D_{w_0}$. Exactly as in \cite{ho},
 the contraction property of the vertical Poincaré metric shows that it converges to a graph $\gamma_{w_0}$. 
 It is   straightforward to check that the family of graphs $\gamma_w$ is the desired holomorphic motion. The vertical expansion statement follows as well.
\end{proof}

Let us now recall a few facts from the work of Jonsson \cite{jonsson2}. For every $z$, let
 $\mu_z = T\wedge [\set{z}\times \cc]$, which is a probability measure supported exactly on $J_z$. 
 Every invariant circle $C$ in the Siegel disk admits a unique 
$p$-invariant  probability measure   which we denote by $\lambda_C$. 
Then the measure $\mu_C = 
\int \mu_z\lambda_C(dz)$ is $f$-invariant and ergodic. Given such a family of circles and a probability measure 
$\Lambda$ on this family we can   integrate with respect to $C$ to get an invariant measure for $f$. Taking $\Lambda$ to be smooth, 
we see in particular that there exists   invariant probability measures absolutely continuous with respect to $\sigma_T$, more precisely, with 
respect to $T\wedge idz\wedge d\overline z$. 
Observe that by the slicing formula, $T\wedge idz\wedge d\overline z$ is an integral of $\mu_z$, so we infer that 
 the support of $T\wedge idz\wedge d\overline z$ equals $\overline {\bigcup_{z\in \cc} J_z}$, 
which  is smaller than $J$ in general  (this happens for instance for $f(z,w) = (z^2, w^2)$). This is not the case in our situation, as the following lemma shows. 

\begin{lem}\label{lem:julia}
In $N\times \cc$, the Julia set coincides with $ \overline {\bigcup\nolimits_{z_\in N } J_z}$.
\end{lem}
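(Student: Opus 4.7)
The inclusion $\overline{\bigcup_{z\in N} J_z} \subset J \cap (N\times\cc)$ is immediate: for each $z$ the slice $\mu_z = T\wedge[\{z\}\times\cc]$ is supported exactly on $J_z$ (as recalled from Jonsson's work above), and by slicing theory $\supp \mu_z \subset \supp T = J$. For the reverse direction I will show that $T \equiv 0$ on the open set $\Omega := (N\times\cc)\setminus\overline{\bigcup_{z\in N} J_z}$. The key simplification on $N\times\cc$ is that $p$ is linearizable on $N$, so $G_p\equiv 0$ there, and hence $G(z,w) = G_z(w)$, which vanishes exactly on $\{w\in K_z\}$. The plan is to split $\Omega$ into (i) $\Omega\cap\{G>0\}$, (ii) $\Omega\cap(\{G=0\})^\circ$, and (iii) $\Omega\cap\partial\{G=0\}$, and to handle each part separately.

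On part (i): since $p^n(z_0)$ is bounded in $N$ while $|q_{z_0}^n(w_0)|\to\infty$, the orbit $f^n(z_0,w_0)$ converges in $\pd$ to $[0:1:0]\in L_\infty$. A direct computation in the affine chart $(u,t)=(z/w,1/w)$ centered at $[0:1:0]$, using that $f$ is a regular skew product (so the top-degree parts of $p$ and $q$ are $z^d$ and $w^d$), shows that the extension of $f$ to $\pd$ fixes $(0,0)$ with vanishing differential, exhibiting $[0:1:0]$ as a super-attracting fixed point. In particular it lies in the Fatou set, and by complete invariance its full basin is Fatou, so $T=0$ on all of $\Omega\cap\{G>0\}$. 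On part (ii), $G\equiv 0$ on a neighborhood, so trivially $T=0$ there.

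It remains to rule out part (iii). If $(z_0,w_0)\in\Omega\cap\partial\{G=0\}$, then there exist $(z_n,w_n)\to(z_0,w_0)$ with $w_n\in K_{z_n}^c$ and $G(z_n,w_n)>0$; by continuity $G(z_n,w_n)\to 0$. The crucial step, and the main technical point, is to upgrade this to $\dist(w_n,K_{z_n})\to 0$. This follows by compactness: on the set $\{(z,w) : z\in\overline U,\ |w|\le R,\ \dist(w,K_z)\ge\delta\}$, which is closed (by upper semi-continuity of $z\mapsto K_z$) and bounded, the continuous and strictly positive function $G$ attains a positive minimum, ruling out $G(z_n,w_n)\to 0$ unless $\dist(w_n,K_{z_n})\to 0$. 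Choosing then $w_n^*\in K_{z_n}$ with $w_n^*\to w_0$, the segment from $w_n$ (outside $K_{z_n}$) to $w_n^*$ (inside) must cross $J_{z_n}$ at a point $w_n'\to w_0$, producing a sequence in $\bigcup J_z$ accumulating at $(z_0,w_0)$ and contradicting $(z_0,w_0)\in\Omega$. The main obstacle is precisely this passage from the analytic condition $G\to 0$ to the geometric statement $\dist(\cdot, K_z)\to 0$ as $z$ varies; once it is in hand, parts (i) and (ii) follow from standard facts about basins of super-attracting periodic points.
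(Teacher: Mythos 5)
Your argument takes a genuinely different, potential-theoretic route from the paper's. The paper's proof is dynamical: given $x\in J\cap(N\times\cc)$, one takes an arbitrarily small disk $D$ through $x$ on which $(f^n)$ is not normal, extracts a (necessarily vertical) Brody curve via the Zalcman--Brody lemma, and then forces some $f^n(D)$ to meet the holomorphic motion $E$ of the hyperbolic set $E_0$ by producing a continuous psh barrier $u(z,w)=\int\log\abs{w-s}\,dm_z(s)$ that is pluriharmonic off $E$, bounded below and proper in $w$; pulling back puts $D$ into $\bigcup_z J_z$. Your approach instead decomposes $\Omega=(N\times\cc)\setminus\overline{\bigcup_{z\in N}J_z}$ according to the sign of the fiberwise Green function. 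The easy inclusion, the observation $G(z,w)=G_z(w)$ on $N\times\cc$ (hence $J\cap(N\times\cc)=\partial\{G=0\}$), and parts (i) and (ii) are all fine, so you have correctly reduced the lemma to showing $\partial\{G=0\}\cap(N\times\cc)\subset\overline{\bigcup_z J_z}$.

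The gap is in part (iii), exactly where you flag ``the main technical point''. You claim that $\{(z,w):z\in\overline U,\ \abs{w}\le R,\ \dist(w,K_z)\ge\delta\}$ is closed ``by upper semi-continuity of $z\mapsto K_z$''. But upper semi-continuity of $z\mapsto K_z$ (which does hold, being equivalent to closedness of $\{G=0\}$) makes the function $(z,w)\mapsto\dist(w,K_z)$ \emph{lower} semi-continuous, not upper: if $(z_n,w_n)\to(z_*,w_*)$ and $s_n\in K_{z_n}$ realizes the distance, any limit point of $(s_n)$ lies in $K_{z_*}$, so $\dist(w_*,K_{z_*})\le\liminf_n\dist(w_n,K_{z_n})$. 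Thus $\{\dist\le\delta\}$ is closed, but $\{\dist\ge\delta\}$ need not be, and the compactness argument collapses. Moreover the inequality that semi-continuity does give you is useless here, since the limit point has $w_*\in K_{z_*}$ and the left-hand side is already $0$.

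This is not cosmetic. What you actually need is a form of \emph{lower} semi-continuity of $z\mapsto K_z$ near $w_0\in\Int(K_{z_0})$, i.e., that the Fatou component of the fiber dynamics containing $w_0$ does not suddenly collapse as $z$ moves off $z_0$; the failure of this is precisely what one worries about for rotation-type or parabolic-type components, and ruling it out is in effect the hard content of the lemma. It is telling that your proof nowhere uses hypothesis (A1) (the hyperbolic continuum $E_0$), whereas the paper's proof of this very lemma uses it essentially (it is what makes the barrier $u$ exist). The lemma as stated is a consequence of assumption (A), not a general fact about skew products over a Siegel disk, so the soft compactness argument cannot be expected to work.
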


\begin{proof}
Let $x = (z,w) \in J\cap (N\times \cc)$. There exists an arbitrary small
 holomorphic disk $D$ through $x$ such that 
$f^n\rest{D}$ is not a normal family. 
We claim that for large $n$, $f^n(D)$ intersects $E$. This implies the lemma because as $f$ is vertically expanding along $E$, 
$E$ is contained in $ \bigcup_{z\in N} J_z$, and since moreover 
$ \bigcup_{z\in \cc}  J_z$ is totally invariant,   we conclude that $D\cap \bigcup_{z\in N}  J_z\neq \emptyset$.

To prove the claim, we use a Kobayashi hyperbolicity argument.  
Since   $f^n\rest{D}$ is not a normal family, by the Zalcmann-Brody lemma,  some subsequence 
can be  reparameterized to converge to a non constant entire curve $\phi:\cc\cv N\times \cc$, which must then be contained in a 
vertical line. 
Now we claim that there is a   pluriharmonic function $h$ in  $(N\times \cc)\setminus E$ that is bounded from below   and  tends to infinity as $w\cv\infty$. So if for all $n\geq 0$,  $f^n(D)$ avoids $E$, $h\circ \phi$ is   harmonic and bounded from below on $\cc$, 
hence constant, and we get a contradiction. 

To construct the function $h$, we do as follows. Since $E_0$ is a non-trivial continuum, it is not polar so it carries a probability measure $m_0$ with continuous potential. Let $m_z$ be the image of $m_0$ under  the holomorphic motion at time $z$. 
In \cite[\S 6]{structure}  it is shown that the function $$u :(z,w)\longmapsto  \int \log\abs{w-s} dm_z(s)$$ is continuous, psh, and $dd^cu  = \int [\gamma_w] dm_0(w)$.  Therefore it is enough to choose $h=u$ and we are done. 
\end{proof}

\medskip 

{\noindent \em Step 2: persistent intersection with the post-critical set}. Recall that by assumption there is a smooth 
component $W$ of $f(\mathrm{Crit}(f))$ passing through the fixed point $(0,m)\in E$, and that is not fixed. 
This means that it does not coincide with the continuation $\gamma_m$ of $(0,m)$. Recall also that since $\theta$ is linearizable, 
there is a foliation of  the (punctured) Siegel disk $\Delta$ by invariant circles.  

The next lemma is where we use the connectedness of $E_0$.

\begin{lem}\label{lem:connex}
Reducing $N$ if needed,  the following property holds: if $C\subset N$ is any $p$-invariant circle,   there is an intersection between $E$ and $W$ over $C$. 
\end{lem}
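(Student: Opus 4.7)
The plan is to reduce Lemma \ref{lem:connex} to a topological contradiction via Slodkowski's extension of the holomorphic motion, combined with a Rouch\'e-type counting argument that exploits the connectedness of $E_0$. First, I work near the fixed point $(0,m)$ and write the smooth germ of $W$ as a graph $w = \psi(z)$, with $\psi$ holomorphic and $\psi(0)=m$. Since $m$ is fixed by $q_0$ (after the reduction to $k=1$), the equivariance of the motion from Lemma \ref{lem:motion} gives $f(\gamma_m) = \gamma_{q_0(m)} = \gamma_m$. So if $W$ coincided with $\gamma_m$ then $W$ would itself be $f$-invariant, contradicting the non-periodicity clause of (A3). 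Hence $\psi - \gamma_m$ is a nonzero holomorphic function vanishing to some order $\kappa \geq 1$ at the origin.

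Next, I would invoke Slodkowski's theorem to extend the holomorphic motion of $E_0$ to a holomorphic motion of the Riemann sphere over $N$; write $H_z \colon \widehat{\cc} \to \widehat{\cc}$ for the resulting family of homeomorphisms and set
\[
\tilde s(z) := H_z^{-1}(\psi(z)).
\]
This is continuous on $N$ with $\tilde s(0) = m$. Shrinking $N$ once, I can ensure that $\tilde s(N)$ is contained in a neighborhood of $m$ of diameter strictly smaller than $\operatorname{diam}(E_0)$.

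Now suppose by contradiction that some $p$-invariant circle $C \subset N$, bounding a disk $D_C \ni 0$, satisfies $\psi(z) \neq \gamma_s(z)$ for every $z \in C$ and every $s \in E_0$. For each $s \in E_0$, let $n(s)$ be the number of zeros of $z \mapsto \psi(z) - \gamma_s(z)$ in $D_C$, counted with multiplicity. The $\lambda$-lemma, together with the uniform bounds from Lemma \ref{lem:motion}, ensures that $s \mapsto \gamma_s$ is continuous from $E_0$ into the uniform topology on $\overline{D_C}$ as a normal family. Rouch\'e's theorem then gives that $n$ is locally constant on $E_0$, and connectedness of $E_0$ forces it to be constantly equal to some $n_0$. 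Taking $s=m$ and using the zero of order $\kappa$ at the origin yields $n_0 \geq \kappa \geq 1$.

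Finally, $n(s) \geq 1$ for every $s \in E_0$ means $\tilde s^{-1}(s) \cap D_C$ is nonempty, whence $\tilde s(D_C) \supset E_0$. But $\tilde s(D_C) \subset \tilde s(N)$ has diameter strictly smaller than $\operatorname{diam}(E_0)$, a contradiction. The main obstacle I anticipate is the bookkeeping of the second and third steps: specifically, ensuring that the Slodkowski extension is sufficiently well-behaved in both variables to make $\tilde s$ genuinely continuous, and that the continuity of $s \mapsto \gamma_s$ is uniform enough on $\overline{D_C}$ as $s$ ranges over the compact set $E_0$ for Rouch\'e to apply without exception.
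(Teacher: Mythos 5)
Your argument is correct in the cases it covers, and it takes a genuinely different route from the paper. The paper splits into three cases according to whether $W$ is transverse to $\gamma_m$, tangent to $\gamma_m$, or tangent to the vertical axis, and for the tangency-with-$\gamma_m$ case runs a topological deformation of the lamination (interpolating the Beltrami coefficient of the motion to a trivial one and tracking intersection points via \cite[Lemma 6.4]{bls}). You replace the first two cases with a single complex-analytic count: assume no intersection over $C$, count zeros of $g_s = \psi - \gamma_s$ in $D_C$, use Rouch\'e plus connectedness of $E_0$ to show the count is constant, evaluate at $s=m$ to get the count $\geq 1$, and conclude that $\tilde s(D_C) \supset E_0$, which is incompatible with the diameter bound. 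This is cleaner in that it treats $\kappa = 1$ and $\kappa \geq 2$ uniformly and avoids the deformation of laminations; the quantities it needs (continuity of $s \mapsto \gamma_s$ in $C^0(\overline{D_C})$, continuity of $\tilde s$) are supplied by the $\lambda$-lemma and Slodkowski as you note, and the required non-vanishing $\kappa < \infty$ is precisely assumption (A3) after the reduction to $k=1$.

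There is, however, a real (if small) gap: you assume from the outset that $W$ can be written as a graph $w = \psi(z)$ near $(0,m)$, which fails if $W$ is tangent to the vertical axis $\{z=0\}$ there. This is the paper's third case. In that case the projection $\pi_1\rest{W}$ is a branched cover of degree $\geq 2$ near $(0,m)$ and your counting function $n(s)$ is not even set up. The paper disposes of this case separately by noting that a vertically tangent $W$ is transverse to the lamination $E$, so $W\cap E$ is a connected set homeomorphic to a neighborhood of $m$ in $E_0$, after which the connectedness argument closes the case directly. You should either add that case verbatim, or adapt the degree count by parameterizing $W$ as $z=\chi(w)$ and counting zeros of $w \mapsto w - \gamma_s(\chi(w))$; as written the proposal does not cover it.
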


\begin{proof}
The simplest situation is when $W$ is locally a graph over the $z$ coordinate and it is transverse to $\gamma_m$. In this case, 
$W\cap E$ is locally homeomorphic to $E_0$ near $(0,m)$.   The curves $\pi_1^{-1}(C)$ define a foliation of    $W\setminus\set{(0,m)}$ 
by nested  Jordan curves  near $(0,m)$. Since $E_0$ is connected, we infer that all curves $\pi_1^{-1}(C)$ sufficiently 
 close to the origin intersect $E$ and 
we are done. 

\medskip

The argument is slightly more delicate when 
$W$ is a graph tangent to $\gamma_m$ at $(0,m)$. Let $k$ be the order of contact between these 
two curves.   By \cite[Lemma 6.4]{bls}, for $w\neq m$ close to $m$, $\gamma_w$ intersects $W$ transversely in exactly $k$ points. 
There exist  local coordinates $(x,y)$ near $(0,m)$ in which $\gamma_m =
\set{y=0}$ and $W = \set{y=x^k}$.  As before, the point is to prove 
that $\pi_1(E\cap W)$ is connected (here the projection $\pi_1$ refers to the new coordinates). By Slodkowski's theorem \cite{slodkowski}, 
we extend the holomorphic motion  to a neighborhood of $E$. Let $\phi$ be the mapping defined near the origin in $\cc$ by 
$$\phi:x \mapsto (x, x^k) \mapsto \mathrm{hol}(x,x^k)\in \set{y=0},$$
where $\mathrm{hol}$ is the holonomy map  sending $(x,y)$ to the central fiber by following the holomorphic motion. 
We want to show that $\phi^{-1}(E_0)$ is connected near the origin. 
The idea is that $\phi$ is a topological deformation of $\phi_0:x\mapsto x^k$. Indeed, by deforming the Beltrami coefficient (ellipse field) 
of the holomorphic motion $E$ to a trivial one (field of circles), we find a continuous
 family of laminations $(E^s)_{s\in [0,1]}$ with $E^1=E$ and $E^0$ is the foliation by horizontal lines. Now, if $y\neq 0$ and 
 $\gamma_{y}^s$ denotes the leaf of $E^s$ through $(0,y)$, by \cite[Lemma 6.4]{bls},  for $s\in [0,1]$ 
 the intersection points 
between $W$ and $\gamma_{y}^s$ move continuously and without collision, thus we can follow them from $s=1$ to $s=0$. 
It follows that  $\phi^{-1}(E_0)$ is   homeomorphic to 
$\phi_0^{-1}(E_0)$ near the origin. Since $E_0$ is connected and contains 0, $\phi_0^{-1}(E_0)$ is connected near the origin, 
and we are done. 

\medskip

The last case is when $W$ is tangent to the vertical axis at $(0,m)$. Then  it is transverse to the lamination, so $W\cap E$ is connected near $(0, m)$ and so does $\pi_1(W\cap E)$.
\end{proof}

\medskip

{\noindent \em Step 3': propagation of laminarity and  conclusion in the laminar case.}
To make the argument easier to understand, we start by giving  a direct proof of  Corollary \ref{cor:non laminar}. 

Assume   that there is an open set $\om\subset N\times \cc$ in which the  Green current $T$ is (nonzero and) laminar, 
that is, it is an integral of compatible holomorphic disks. Let us first observe that these disks cannot all be contained in 
 vertical fibers\footnote{It is   likely that the set of vertical disks subordinate to $T$ is of zero trace measure but we could not prove it.}.  
Indeed otherwise in $\om$ we would have that $T\wedge i dz\wedge d\overline z=0$, thereby  contradicting Lemma \ref{lem:julia}
(see the comments preceding the lemma). 
Thus there exists a disk $U$ in $N$ and a 
non-trivial uniformly laminar current $S\leq T$ made of graphs over $U$. Saturate $U$ under the dynamics of $p$ to obtain 
 $A = \bigcup_{n\geq 0} p^n(U)$ which is typically an annulus (unless $U$ contains the origin).  We now
  show that the existence of such a $S$ forces $J$ to have a uniform laminar structure in $A\times \cc$. 
  
 \begin{lem}\label{lem:annulus}
 There exists a lamination in $A\times \cc$, whose leaves are locally graphs over the first coordinate, such that $J\cap (A\times \cc)$ 
 is a union of leaves. 
 Moreover, every holomorphic disk contained in $J$   must be compatible with this lamination, that is, contained in a leaf.
 \end{lem}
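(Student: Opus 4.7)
The strategy is to use $f$-invariance of $T$ to spread the uniformly laminar structure of $S$ across $A$, and to exploit the identity $T\wedge T=0$ in $\Delta\times\cc$ to glue the resulting pieces into a single lamination that moreover absorbs any holomorphic disk sitting inside $J$. \emph{Step 1 (propagation):} since $N$ is a $p$-invariant disk in the linearizing coordinate, $p:N\to N$ is a biholomorphism; hence for every $n\geq 0$, $f^n$ restricts to a proper map $U\times\cc\to p^n(U)\times\cc$ which is a biholomorphism in the base and of degree $d^n$ in the fibers, and which sends graphs over $U$ to graphs over $p^n(U)$. Setting $S_n := d^{-n}(f^n)_*S$ gives a uniformly laminar current on $p^n(U)\times\cc$ made of graphs over $p^n(U)$, and the identity $f_*T=dT$ yields $S_n\leq T$.

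\emph{Step 2 (compatibility and gluing):} for $n\neq m$, both $S_n$ and $S_m$ are uniformly laminar currents dominated by $T$ in $N\times\cc\subset\Delta\times\cc$, where $T\wedge T=0$. By the standard compatibility principle for uniformly laminar currents subordinate to a common positive closed current of vanishing self-intersection (see \cite{structure,bls}), their leaves cannot have isolated intersections: on the overlap $(p^n(U)\cap p^m(U))\times\cc$, two such leaves either coincide or are disjoint. Gluing across the cover $\{p^n(U)\}_{n\geq 0}$ of $A$ then yields a lamination $\mathcal L$ in $A\times\cc$ whose leaves are locally graphs over $A$ and contained in $\supp T=J$.

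\emph{Step 3 (every point of $J\cap(A\times\cc)$ lies on a leaf --- the main obstacle):} the plan is first to incorporate into $\mathcal L$ the graphs $\gamma_w$, $w\in E_0$, coming from the holomorphic motion of Lemma \ref{lem:motion}. These graphs lie in $J$ by the vertical expansion along $E$, and are forced by Step 2 to be compatible with the leaves of $\mathcal L$. Using the vertical expansion and the crossed-mapping construction from the proof of Lemma \ref{lem:motion}, I then pull back such graphs by inverse branches of $f^k$ to produce graphs through every point of $\bigcup_{z\in A}J_z$. A standard compactness argument on families of graphs of uniformly bounded area, combined with Lemma \ref{lem:julia} (which identifies $J\cap(N\times\cc)$ with $\overline{\bigcup_{z\in N}J_z}$), should finally yield a leaf through every point of $J\cap(A\times\cc)$. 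The delicate point is to ensure compatibility of the newly produced leaves with the existing $\mathcal L$ at every stage, which again reduces to the $T\wedge T=0$ principle of Step 2.

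\emph{Step 4 (disks in $J$ are leaves):} let $D\subset J$ be a holomorphic disk. By Step 3, $D$ meets $\supp\mathcal L$, so I pick a leaf $L$ through such an intersection point. If $D$ were not locally contained in $L$, the two disks would intersect at an isolated point, and reproducing the compatibility argument of Step 2 with a suitable truncation of $[D]$ in place of $S_m$ would contradict $T\wedge T=0$ in $N\times\cc$. Therefore $D\subset L$ locally, and the proof would be complete.
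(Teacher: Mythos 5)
Steps 1, 2 and 4 of your proposal track the paper's argument reasonably closely: you push $S$ forward, appeal to $T\wedge T=0$ and the intersection theory of uniformly laminar (in the paper, woven) currents to get compatibility, glue, and finish as in the paper. (The paper is a bit more careful in Step~2 --- the compatibility principle gives compatibility only for a.e.\ pair of leaves, and one then upgrades to all pairs via persistence of isolated intersections --- but that is a detail you could fill in.)

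Step 3 is a genuine gap, and you sense it yourself. The plan to pull back the motion graphs $\gamma_w$ of $E$ by inverse branches of $f^k$ does not work, for two reasons. First, there is no justification for the claim that the $\gamma_w$ ``are forced by Step 2 to be compatible with the leaves of $\mathcal L$'': the compatibility principle for currents of vanishing self-intersection applies to disks subordinate to uniformly laminar currents $\leq T$, and there is no reason for a uniformly laminar current $\leq T$ to live on the motion $E$ (its $T$-trace measure could perfectly well vanish). Second, and more seriously, the preimages $f^{-k}(\gamma_w)$ need not be graphs --- exactly where they cross $\mathrm{Crit}(f)$ they can be singular or have vertical tangencies, and this is \emph{precisely} the mechanism exploited in the very next paragraph of the paper to derive the contradiction that proves Corollary~\ref{cor:non laminar}. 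An argument that builds a lamination of $J$ from preimages of $E$, without ever feeding in the standing laminarity assumption on $T$, would prove too much and contradict the main theorem.

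What the paper does instead is an ergodicity argument, and that is the missing ingredient. Over each $p$-invariant circle $C\subset A'$ the fibered measure $\mu_C=\int\mu_z\,\lambda_C(dz)$ is $f$-invariant and ergodic. Since $0<S\wedge[\{z\}\times\cc]\leq\mu_z$ for $z\in U'$, the lamination $\el'$ carries positive $\mu_C$-mass, so by ergodicity the saturation $\bigcup_{n\geq 0}f^n(\el')$ has full $\mu_C$-measure for every such $C$; integrating over $C$, it has full $T\wedge i\,dz\wedge d\bar z$-measure in $A'\times\cc$. Lemma~\ref{lem:julia} identifies the support of $T\wedge i\,dz\wedge d\bar z$ with $J$, so $\bigcup_n f^n(\el')$ is dense in $J\cap(A'\times\cc)$, and a normal-families argument (graphs over $p^n(U')$ extend to the larger $p^n(U)$ and are uniformly bounded) shows that the closure is still a lamination. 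This is how one gets a leaf through \emph{every} point of $J$; no pull-back of $E$ is used at this stage.
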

 

\begin{proof}
Let $\el$ be the lamination by graphs over $U$ underlying $S$. By assumption, $\el$ is of positive trace measure. As already explained, the 
image of a graph $\Gamma$ over $U$ is a graph over $p(U)$ and moreover $f : \Gamma\cv f(\Gamma)$ is of multiplicity 1, so if $\Gamma$ 
is a leaf of $\el$, for every $n\geq 1$, $f^n(\Gamma)$ is a graph over $p^n(U)$, contained in $J$ (more precisely, in $\bigcup_{z\in N} J_z$). 
We claim that these graphs are compatible (that is, they are disjoint or coincide). Indeed,  write  
$S$   as $S=\int [\gamma_\alpha]m(d\alpha)$ for some measured  family  of graphs $(\Gamma_\alpha)$, and for notational ease assume 
that $n=1$. Then $f_*S$ expresses as 
  $\int [f(\gamma_\alpha)]m(d\alpha)$, that is, it is a uniformly woven current. Now,  observe that $f_*S\leq f_*T = dT$, hence $f_*S$ is a 
  positive closed current in $p(U)\times \cc$, with continuous potential (see \cite[Lemma 8.2]{bls}) and  
  such that $ (f_*S)^2\leq d^2T^2 = 0$. 
  The intersection of uniformly woven currents is geometric (see e.g. \cite[Prop. 2.6]{ddg2}), so we infer that for a.e. 
  $(\alpha, \beta)$, $f(\Gamma_\alpha)$ and $f(\Gamma_\beta)$ are compatible. To get the result for every $(\alpha, \beta)$, 
  assume that 
  $\alpha_0$ and $\beta_0$ are such that $f(\Gamma_{\alpha_0})$ and $f(\Gamma_{\beta_0})$ have a non-trivial intersection. Then 
  by the 
  persistence of intersections of curves in $\cd$,  for $\alpha$ (resp. $\beta$) close to $\alpha_0$ (resp. $\beta_0$) 
  we     get that $[f(\Gamma_\alpha)]\wedge [f(\Gamma_\beta)]>0$, which contradicts the   generic non-intersection. 
  So we conclude that the leaves 
  of $f(\el)$ are disjoint, as asserted. In particular, $f(\el)$, being a closed family of disjoint graphs over $p(U)$, is a lamination. The same 
  argument shows that for any two integers $n$ and $m$, the leaves of the laminations $f^n(\el)$ and $f^m(\el)$ are compatible, that is, they 
  admit no proper intersections. 
  
  \medskip
  
Reduce $U$ a little bit to obtain an open set $U'$, with corresponding annulus $A'$ and lamination $\el'$.
Consider the set $\bigcup_{n\geq 0} f^n(\el')$. 
This is a uniformly bounded family of compatible graphs over sets of the form $p^n(U')$ in $A'$. 
Notice that $p^n(U')$ is of uniform size and stays at uniform distance from $\fr (p^n(U))$.   
    Recall that over any $p$-invariant circle $C$, 
there is an $f$-invariant ergodic probability measure $\mu_C$, and that $T\wedge idz\wedge d\overline z$ is equivalent to an integral of measures of this form. Since for every $z\in U'$, $0<S\wedge [\set{z}\times \cc] \leq \mu_z$, we get that for every invariant circle in $A'$, 
$\el'$ is of positive $\mu_C$ measure. By ergodicity we infer that for every such $C$, $\bigcup_{n\geq 0} f^n(\el')$ is of full  $\mu_C$ 
measure, and integrating with respect to
 $C$ we deduce  that it is of full $(T\wedge idz\wedge d\overline z)$ measure in $A'$. By Lemma \ref{lem:julia}, 
$\supp(T\wedge idz\wedge d\overline z) = J$ so we conclude  that $\bigcup_{n\geq 0} f^n(\el')$ is dense in $J\cap (A'\times \cc)$.  
Finally, $\overline{\bigcup_{n\geq 0} f^n(\el')}$ defines a lamination of $J\cap 
(A'\times \cc)$. Indeed for every $x\in J\cap (A'\times \cc)$ belongs to the cluster set of a family of graphs $\Gamma'_j$ over 
$p^{n_j}(U')$. Since these graphs extend to $p^{n_j}(U)$ and are uniformly bounded, we obtain a limiting graph through $x$. 
Persistence of isolated intersections implies that any two limiting graphs must be compatible, and we are done. 

\medskip

The second statement of the lemma is obvious: since $J$ is laminated, every disk contained in $J$ and not contained in a leaf would have transverse intersections with nearby leaves, and by using the holonomy we would get that $J$ has non-empty interior, a contradiction. 
\end{proof}

We are now ready to conclude the proof of Corollary \ref{cor:non laminar}. 
It follows from the previous lemma that the holomorphic motion $E$ is compatible with the laminar structure of $J$. 
By Lemma \ref{lem:connex}, 
there is a  component   $V$ of  $ \mathrm{Crit}(f)$ in $\Delta\times \cc$ 
 such that $f(V)$   admits a non trivial intersection with a leaf  of this holomorphic motion  over $A$.  
 Let  $x  \in  V \cap (N\times \cc)$      such that $f(x)=y$ is such an intersection point, and denote by $\Gamma$ 
 the graph of $E$  through $y$. Since such intersections are persistent, 
 shifting $y$ slightly, we may further assume that $ \Gamma$ is not contained in (some other component of) $f(\mathrm{Crit}(f))$. 
 Since $J$ is totally invariant,   $f^{-1}(\Gamma)$ must be  contained in $J$.  Let us show that this is contradictory.

Writing in coordinates $\Gamma = \set{(t,\gamma(t)), t\in U}$ we infer that an equation for $f^{-1}(\Gamma)$ is 
$h(z,w) = q(z,w) - \gamma(p(z)) =0$. Since $p$ has no critical points in $\Delta$ and $x\in \mathrm{Crit}(f)$, 
$\frac{\fr h}{\fr w}  = \frac{\fr q}{\fr w} = 0$ at $x$. If $\frac{\fr h}{\fr z} (x)\neq 0$, this means that $f^{-1}(\Gamma)$ 
has a vertical tangency at $x$, which is impossible by Lemma \ref{lem:annulus}.  Otherwise $\frac{\fr h}{\fr z} (x)= 0$ 
and there are two possibilities: if the equation $(h=0)$ is of multiplicity 1, this implies that  $f^{-1}(\Gamma)$ is singular at $x$, 
which again contradicts Lemma \ref{lem:annulus}. The other option is that 
 $(h=0)$ is smooth with non-trivial multiplicity, but then it must be contained in the critical set, which contradicts our assumptions on 
 $\Gamma$.   Thus in any case, we arrive at a contradiction, and the proof  of Corollary \ref{cor:non laminar} is complete \qed

%
%
%
%
%
%
%
%

\medskip

{\noindent \em Step 3: conclusion in the general  case.}
We now handle the general case, that is we prove Theorem \ref{thm:non quasi laminar}.
We will  design  a different argument for the propagation of laminarity, and obtain a weaker version of Lemma \ref{lem:annulus} which will be 
sufficient for our purposes.  

\medskip

Assume that $T$ is quasi-laminar in some open subset $\om\subset N\times \cc$.  
Recall from \cite{fatou} that  for $\sigma_T$-a.e. $x\in N\times \cc$, there is a unique Fatou direction at $x$,
along which the dynamics is not expanding. This direction coincides with the tangent space to $T$ at $x$, which is well-defined because 
$T$ is simple a.e. on $J_1\setminus J_2$ \cite[Thm 3.4]{fatou}. If $D$ is 
any Fatou disk through $x$, $T_xD$ must then coincide with the tangent space to $T$ at $x$. 

By Lemma \ref{lem:julia}, $T\wedge idz\wedge d\overline z$ is non-zero in $\om$. We claim that for 
$(T\wedge idz\wedge d\overline z)$-a.e. $x$, the tangent space of $T$ at $x$ is not vertical\footnote{Another argument consists in showing that  over every invariant circle,  the Lyapunov exponent of $\mu_C$ in the vertical direction is at least $\frac{\log d}{2}$}. 
Indeed, 
write $T$ as a differential form with measure coefficients 
$T = \sum_{\alpha, \beta \in \set{z,w}} T_{\alpha, \overline \beta}\; id\alpha\wedge d\overline\beta$. Then 
$\sigma_T = T_{z,\overline z} + T_{w, \overline w}$, and by the Radon-Nikodym theorem there exist measurable functions 
$h_{\alpha, \overline\beta}$ such that for $\alpha, \beta \in \set{z,w}$
$T_{\alpha, \overline \beta} = h_{\alpha, \overline\beta} \sigma_T$. 
The matrix 
$(h_{\alpha, \overline \beta})$ has rank 1 $\sigma_T$-a.e. on $J_1\setminus J_2$, and the tangent vector to $T$ is vertical precisely when 
$h_{w, \overline w} = 0$ (which implies $h_{w, \overline z}  = h_{z, \overline w}= 0$). We conclude by observing that since
$T\wedge idz\wedge d\overline z = T_{w,\overline w} = h_{w,\overline w} \sigma_T$, the set $\set{h_{w, \overline w} = 0}$ has zero 
$T\wedge idz\wedge d\overline z$ measure. 

Thus in $\om$ there exists a set of positive trace 
measure of points $x$ such that any Fatou disk at $x$ must have a non-vertical tangent space at $x$. 
For a small value of $r_0$ to be determined shortly, we let $L$ be the set of points $x\in J\cap(N\times \cc)$
%
such that there exists a Fatou disk through $x$, which is a graph over $D(\pi_1(x), 2r_0)$. 
For $r>0$, denote by $\mathcal G(r)$ the set of graphs   over a disk of radius    $r$, that are contained in $\bigcup_{\zeta\in \Delta} K_\zeta$. 
The  compactness of $\Gamma(2r_0)$ implies that  $L$ is a compact subset of $J$  which is of  positive 
$T\wedge idz\wedge d\overline z$-measure for small enough $r_0$. Given such a $r_0$, there exists $0<r\leq r_0$ so that for every $z\in N$, and every $n\geq 0$, $p^n(D(z, 2r_0))$ contains $D(p^n(z), 2r)$. 

Recall from \cite{jonsson} that $z\mapsto J_z$ is lower-semicontinuous. In particular $\delta =  \min_{z\in \overline N} \mathrm{diam}(J_z)$ is a 
well defined positive quantity.  By compactness of the set of graphs, if $r$ is small enough and 
$\gamma$ is a graph over some disk $D(z,2r)$
that is  contained in $\bigcup_{\zeta\in \Delta} K_\zeta$, 
then $\mathrm{diam}(\gamma\rest{D(z,r)})< \frac{\delta}{4}$. We fix $r_0$ such that the associated $r$ has this
 property.  
 
 \medskip
 
To prove uniqueness and disjointness properties for the Fatou disks, we will use an argument based on an expansivity property of the 
fiberwise dynamics, which itself follows from a form of   mixing in the fiber direction. The statement we need  is the following. 

\begin{lem}\label{lem:expansive}
Let as above  $\delta =  \min_{z\in \overline N} \mathrm{diam}(J_z)$. Then for every $z\in \overline{N}$, and 
every subset $A\subset J_z$ of positive $\mu_z$-measure, there exists $n\geq 0$ such that $\mathrm{diam}(f^n(A))>\frac{3\delta}{4}$. 
\end{lem}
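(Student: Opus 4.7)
The plan is to argue by contradiction using a fiberwise mixing property for the Julia measures $\mu_z$. Suppose there exist $z \in \overline N$ and $A \subset J_z$ with $\mu_z(A) > 0$ such that $\mathrm{diam}(q^n_z(A)) \leq 3\delta/4$ for every $n \geq 0$. Each $q^n_z(A)$ then sits in a closed disk of radius $3\delta/8$, while $\mathrm{diam}(J_{p^n(z)}) \geq \delta$, so for every $n$ one can pick $v_n \in J_{p^n(z)}$ at distance at least $\delta/8$ from $q^n_z(A)$ and set $U_n = B(v_n, \delta/16) \cap J_{p^n(z)}$. By construction $U_n \cap q^n_z(A) = \emptyset$, hence $A \cap (q^n_z)^{-1}(U_n) = \emptyset$. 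Two preliminary observations produce uniform lower bounds. First, by lower semi-continuity of $z' \mapsto J_{z'}$ and compactness of $\overline N$, the quantity
$\eta := \inf\set{\mu_{z'}(B(v,\delta/16)) : z' \in \overline N,\ v \in J_{z'}}$
is strictly positive, so $\mu_{p^n(z)}(U_n) \geq \eta$ for every $n$. Second, the push-forward relation $(q^n_z)_*\mu_z = \mu_{p^n(z)}$ gives $\mu_z((q^n_z)^{-1}(U_n)) = \mu_{p^n(z)}(U_n) \geq \eta$. Thus $A$ and $(q^n_z)^{-1}(U_n)$ are disjoint sets both carrying $\mu_z$-mass bounded below, a configuration that mixing will rule out.

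The required analytic input is the fiberwise Brolin--Lyubich equidistribution for polynomial skew products: the preimage measures $d^{-n}(q^n_z)^*\delta_v$ converge weakly to $\mu_z$ as $n \cv \infty$, uniformly in $v \in J_{p^n(z)}$ outside a thin exceptional set. From this one extracts a decorrelation estimate of the form
$$\bigl| \mu_z\bigl(B \cap (q^n_z)^{-1}(V_n)\bigr) - \mu_z(B)\,\mu_{p^n(z)}(V_n) \bigr| \cv 0,$$
valid for every Borel $B$ with $\mu_z(\fr B) = 0$ and every sequence $V_n \subset J_{p^n(z)}$. Approximating $A$ by a set $B$ with $\mu_z(\fr B) = 0$ and $\mu_z(B \triangle A) < \mu_z(A)\eta/10$ (for instance a small open thickening of a compact subset of $A$), and applying the decorrelation with $V_n = U_n$, the right-hand side stays $\geq \mu_z(A)\eta/2$ for all large $n$, while the disjointness of $A$ and $(q^n_z)^{-1}(U_n)$ forces $\mu_z(B \cap (q^n_z)^{-1}(U_n)) \leq \mu_z(B \setminus A) < \mu_z(A)\eta/10$. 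This is the desired contradiction.

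The principal obstacle lies in establishing the uniform equidistribution/decorrelation estimate in the non-autonomous setting. The weak convergence of preimages for a single fixed target $v$ is standard (it is treated in Jonsson's work on polynomial skew products), but the uniformity in the moving point $v_n$ is what is actually used here. One expects this uniformity to follow from exponential decay of correlations or from spectral estimates on the fibered transfer operator; alternatively one can attempt a compactness argument, extracting a Hausdorff limit $A^* \subset J_{z^*}$ along a subsequence $p^{n_k}(z) \to z^*$ and deriving a contradiction from the fact that $A^*$ would be a proper, positive-measure, asymptotically invariant subset of $J_{z^*}$. In either approach this uniform mixing step is the technical heart of the proof.
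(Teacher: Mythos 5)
Your approach is in the same spirit as the paper's (both rest on a fiberwise mixing estimate for the slice measures $\mu_z$), but the mechanics are genuinely different, and the difference matters: you have correctly identified that your version requires a stronger statement than you prove.

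The paper's proof avoids the need for decorrelation against a \emph{moving} sequence of test sets $V_n$ by exploiting the recurrence of the base dynamics. Since $z\mapsto p(z)$ is an irrational rotation on a circle, one can choose a subsequence $n_j$ with $z_{n_j}\to z_0$, fix two bump functions $\psi_0,\psi'_0$ supported near two points of $J_{z_0}$ at distance $>3\delta/4$, and then apply the mixing estimate \eqref{eq:mixing} with these \emph{fixed} test functions and $\varphi=\mathbf{1}_A$. This reduces everything to a single qualitative decorrelation statement (proved from Jonsson's quantitative bound, \cite[Prop.~2.6]{jonsson2}). Your argument instead picks a fresh target $v_n$ and window $U_n$ at every time $n$, and therefore needs the estimate
$\bigl|\mu_z(B\cap (q^n_z)^{-1}V_n)-\mu_z(B)\mu_{p^n(z)}(V_n)\bigr|\to 0$
uniformly over \emph{sequences} of Borel sets $V_n$. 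As you yourself flag, this uniformity is the ``technical heart'' of your proof and is left open; without it the argument does not close. (For the record, the uniform version does follow from the Chebyshev-type bound in \cite[Prop.~2.6]{jonsson2}: one approximates $\mathbf{1}_B$ by a smooth $\varphi$ in $L^1(\mu_z)$, rewrites the correlation as $\int \mathbf{1}_{V_n}\bigl(d^{-n}(q^n_z)_*\varphi-\int\varphi\,\mu_z\bigr)\,d\mu_{z_n}$, and splits according to whether the integrand exceeds a threshold $t=d^{-n/2}$; this is exactly the step you would need to supply, and it is not a formality.)

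Two smaller remarks. First, a set of diameter $3\delta/4$ in the plane need not lie in a closed disk of radius $3\delta/8$ (Jung's theorem gives $D/\sqrt{3}$, not $D/2$); what you actually use is the existence of a point $v_n\in J_{p^n(z)}$ at distance $\geq\delta/8$ from $q^n_z(A)$, which does hold by a short triangle-inequality argument, but the ``$3\delta/8$'' claim as written is false. Second, your lower bound $\eta>0$ on ball masses is fine, but it should be justified by lower semicontinuity of $(z',v)\mapsto\mu_{z'}(B(v,\delta/16))$ (Portmanteau plus continuity of $z\mapsto\mu_z$) together with compactness of $\{(z',v):z'\in\overline N,\ v\in J_{z'}\}$; the appeal to lower semicontinuity of $z\mapsto J_z$ alone does not give it. In sum: the strategy is sound and parallel to the paper's, but the uniform decorrelation you require is a real gap that the paper's recurrence trick is specifically designed to avoid.
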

 
 \begin{proof}
 Fix $z\in \overline N$ and put $z_n = p^n(z)$. 
 We first claim that if $\varphi\in L^2(\mu_z)$ and $\psi$ is any  test function in $\Delta\times \cc$, then
 \begin{equation}\label{eq:mixing}
 \abs{\int (\psi\circ f^n) \varphi \; \mu_z - \int \psi \; \mu_{z_n} \int \varphi \; \mu_z} \underset{n\cv\infty}{\longrightarrow}0.
 \end{equation}
Indeed observe first that by approximating in $L^2(\mu_z)$, 
it is enough to establish this for a smooth $\varphi$. 
Using the relation $f_*(h \mu_z)  = \unsur{d} (f_*h) f_*\mu_z$ for the slice measures, we rewrite 
$$\int (\psi\circ f^n) \varphi \; \mu_z = \int \lrpar{\unsur{d^n} (f^n)_*\varphi} \psi \; \mu_{z_n},$$
so that \eqref{eq:mixing} boils down to 
 \begin{equation}\label{eq:mixing bis}
\abs{\int \lrpar{\unsur{d^n} (f^n)_*\varphi -\int \varphi \; \mu_z } \psi \; \mu_{z_n} }
\underset{n\cv\infty}{\longrightarrow}0.
\end{equation}
Now for smooth $\varphi$, \cite[Prop. 2.6]{jonsson2} asserts that 
$$\mu_{z_n} \lrpar{\set{w \in \set{z_n} \times \cc, \   \abs{\unsur{d^n} (f^n)_*\varphi (w) - \int \varphi \; \mu_z } >t
 }}\leq \frac{C\norm{\varphi}_{C^2}}{td^n},$$ from which \eqref{eq:mixing bis}, hence \eqref{eq:mixing}, readily follows.
 
 \medskip
 
 Fix $z_0$ such that $\mathrm{diam}( J_{z_0})> \frac{3\delta}{4}$. Fix   $x_0$ and $ x'_0$ in $J_{z_0}$ at  
distance greater than $ \frac{3\delta}{4}$ from each other, and $\psi_0$ and $\psi'_0$ be two bump functions (in $\Delta\times \cc$) supported in   small respective neighborhoods of $x_0$ and $x'_0$. The  continuity of $z\mapsto \mu_z$ implies that if $  z$ 
is close to $z_0$, the integrals
$\int \psi_0  \; \mu_{  z}$ and $\int \psi'_0  \; \mu_{  z}$ are positive. 

To conclude the proof of the lemma, we fix a subsequence $n_j$ such that $z_{n_j}\cv z_0$, and  
set $\varphi = \mathbf{1}_A$. For $\psi = \psi_0$ or $\psi =\psi'_0$, from  \eqref{eq:mixing} we get 
that for large $j$, $\int_A \psi\circ f^{n_j} d\mu_z >0$. This  implies that $f^{n_j}(A)$   intersects both $\supp(\psi_0)$ and 
$\supp(\psi'_0)$, and the result follows.
   \end{proof}
 
 The basic mechanism deriving laminarity from Lemma \ref{lem:expansive}   is contained in the following lemma. 
 
 \begin{lem}\label{lem:disjoint}
Let $z\in \overline N$ and  assume that there exists a compact set  $A\subset J_z$ of 
 positive $\mu_z$-measure, together with  a measurable family of Fatou disks  
 $\set{\gamma_x, \ x\in A}$, that   are graphs over $D(z,2r)$. 
 
 Then the graphs $\gamma_x$ are disjoint over $D(z,r)$. 
 \end{lem}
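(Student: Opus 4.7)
The plan is to argue by contradiction: assume that for some $x_0 \in A$ the set
\[
A(x_0) := \{x \in A \setminus \{x_0\} : \gamma_x \cap \gamma_{x_0} \cap \pi_1^{-1}(D(z,r)) \neq \emptyset\}
\]
has positive $\mu_z$-measure, and derive a contradiction with Lemma~\ref{lem:expansive}.

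First I would control the iterates of the individual disks. Since $p$ acts as a rotation on $N$ in the linearizing coordinate, $p^n(D(z,2r)) = D(p^n(z),2r)$, and by the multiplicity-one property established in the proof of Lemma~\ref{lem:motion}, each $f^n(\gamma_x)$ remains a graph over $D(p^n(z),2r)$, still contained in $\bigcup_{\zeta \in \Delta} K_\zeta$. The defining property of $r$ then guarantees that for every $n \geq 0$ and every $x \in A$,
\[
\mathrm{diam}\bigl(f^n(\gamma_x)|_{D(p^n(z),r)}\bigr) < \delta/4.
\]

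Next I would bound pairwise orbit distances by routing through the pivot graph $\gamma_{x_0}$. For each $x \in A(x_0)$, pick an intersection point $y_0(x) \in \gamma_x \cap \gamma_{x_0}$ with $\pi_1(y_0(x)) \in D(z,r)$. Under $f^n$, the point $f^n(y_0(x))$ lies on both restrictions $f^n(\gamma_x)|_{D(p^n(z),r)}$ and $f^n(\gamma_{x_0})|_{D(p^n(z),r)}$; since $f^n(x)$ also sits on $f^n(\gamma_x)$ over $p^n(z) \in D(p^n(z),r)$, we obtain $\dist(f^n(x), f^n(y_0(x))) < \delta/4$. For two distinct partners $x, x' \in A(x_0)$, the two points $f^n(y_0(x))$ and $f^n(y_0(x'))$ both lie on the single small graph $f^n(\gamma_{x_0})|_{D(p^n(z),r)}$, hence within $\delta/4$ of each other. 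The triangle inequality then yields $\dist(f^n(x), f^n(x')) < 3\delta/4$ uniformly in $n$, so
\[
\mathrm{diam}\bigl(f^n(A(x_0))\bigr) < 3\delta/4 \quad \text{for every } n \geq 0.
\]
But Lemma~\ref{lem:expansive} applied to the positive-measure set $A(x_0)$ produces some $n$ with $\mathrm{diam}(f^n(A(x_0))) > 3\delta/4$, contradicting the previous estimate.

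This establishes $\mu_z(A(x_0)) = 0$ for every $x_0 \in A$. The upgrade from this almost-everywhere statement to the genuine disjointness asserted by the lemma follows from persistence of proper intersections of holomorphic curves combined with a Lebesgue density-point argument for $\mu_z|_A$: if two graphs crossed properly inside $D(z,r)$, the intersection would persist under small perturbation of either endpoint, so a full neighborhood in $A$ of a density point would contribute to some $A(x_0)$, forcing positive measure. The main technical hurdle is the pivot routing in the middle step: the naive triangle inequality through $f^n(x_0)$ alone would only produce a bound of $\delta$, which is too weak to contradict Lemma~\ref{lem:expansive}; routing through two \emph{different} auxiliary points $y_0(x)$ and $y_0(x')$, both lying on the single diameter-$\delta/4$ graph $f^n(\gamma_{x_0})|_{D(p^n(z),r)}$, is exactly what saves the sharp $\delta/4$ margin required by the mixing lemma.
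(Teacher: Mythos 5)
Your proof is correct and follows essentially the same route as the paper's. The core mechanism — bounding $\dist(f^n(y_1),f^n(y_2))$ by $3\delta/4$ via a chain of three graphs, each of iterated diameter $<\delta/4$, and contradicting the fibered mixing estimate of Lemma~\ref{lem:expansive} — is exactly the paper's argument, as is the use of diffuseness of the pushforward of $\mu_z$ under $x\mapsto\gamma_x$ together with persistence of isolated intersections to pass from the measure-theoretic statement to genuine disjointness.
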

 
 \begin{proof}
 Observe first  that  
 if $\gamma$ is a Fatou graph over $D(z,2r)$
 such that $\gamma(z)\in J_z$, then  by normality of the iterates, $f^n(\gamma$ is uniformly bounded, hence 
  $\gamma \subset  \bigcup_{\zeta\in \Delta} K_\zeta$.
Introduce the space $\mathcal{G}_z(2r)$ of graphs over $D(z,2r)$ that are contained in  $\bigcup_{\zeta\in \Delta} K_\zeta$, endowed with the compact-open topology,
 which is a compact metrizable space.
%
  In particular we can project the 
 measure $\mu_z$ by     $e : x\mapsto \gamma_x$. 
 This mapping is injective since   $\gamma_{(z,w)}(z) = w$, so we infer that the image measure $e_*(\mu_z)$
 has no atoms. 
 
 Now assume that there exists $x\neq y \in J_z$ such that $\gamma_x$ and $\gamma_{y}$ intersect over $D(z,r)$.   
 Since $e_*(\mu_z)$  is diffuse, by the continuity of isolated intersections,  there exists a set $A'\subset A$ of positive $\mu_z$ measure  such that if $y_1\in A'$ 
 then $\gamma_{y_1}$ and $\gamma_x$ intersect over $D(z,r)$. Therefore for $y_1,y_2\in A'$ there is a chain $(\gamma_{y_1}, \gamma_x, \gamma_{y_2})$ of three intersecting 
 Fatou graphs connecting $y_1$ and $y_2$. Now recall that by definition of $r$, if $\gamma$ is one of these graphs, then for every 
 $n$, 
 $\mathrm{diam}(f^n(\gamma))$ is smaller than $\frac{\delta}{4}$.
 From this 
 we deduce that for all $n\geq 0$, $\dist (f^n(y_1), f^n(y_2)) < \frac{3\delta}{4}$, which contradicts Lemma \ref{lem:expansive} and 
 thereby completes the proof.  
\end{proof}

  Recall our assumption that there exists a set $L$ of positive trace measure of points $x=(z,w)$ admitting a Fatou disk through $x$ 
  that  is a graph over $D(z, 2r_0)$. By 
  the slicing formula, $T\wedge idz\wedge d\overline z$ is an integral of $\mu_z$, hence  
 there exists a set of positive Lebesgue measure of   of invariant circles $C$ 
 such that  $\mu_C(L)>0$. 
 
\begin{lem}\label{lem:unique}
Let $C$ as above be an invariant circle 
  such that $\mu_C(L)>0$. Let  $z\in C$ and      $x=(z, w)\in J_z$.
Then  there exists  a unique Fatou disk $\gamma_x$
 through  $x$ which is a graph over $D(z,2r)$. In addition, if $x, x'\in J_z$, are distinct points, then $\gamma_x$ and $\gamma_{x'}$ are disjoint over 
 $D(z,r)$.  
\end{lem}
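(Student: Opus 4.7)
My plan is to combine the ergodicity of $\mu_C$ with Lemmas \ref{lem:expansive} and \ref{lem:disjoint}: first build a canonical Fatou graph through $\mu_z$-a.e.\ point by pulling back from $L$, then promote disjointness into pointwise uniqueness, and finally extend to every $z\in C$ by compactness.

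\emph{Step 1 (ergodicity and pullback).} Since $\mu_C$ is $f$-invariant and ergodic with $\mu_C(L)>0$, Birkhoff's theorem gives that the set $A$ of $x$ whose forward orbit visits $L$ infinitely often has $\mu_C(A)=1$. Disintegrating $\mu_C=\int_C\mu_z\,\lambda_C(dz)$ yields a $\lambda_C$-conull subset $C_0\subseteq C$ with $\mu_z(A\cap J_z)=1$ for all $z\in C_0$. For $z\in C_0$ and $x\in A\cap J_z$ I let $n(x)$ be the smallest integer with $f^{n(x)}(x)\in L$, select measurably a Fatou graph $\tilde\gamma$ through $f^{n(x)}(x)$ which is a graph over $D(p^{n(x)}(z),2r_0)$, and pull it back by the unique inverse branch of $f^{n(x)}$ sending $f^{n(x)}(x)$ to $x$; this branch exists because $p$ is conjugate to an irrational rotation on $N$ and the fibered inverse of $q^{n(x)}_z$ is uniquely determined by $x$. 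The choice of $r$ in the setup guarantees that the pulled-back graph restricts to a Fatou graph $\gamma_x$ through $x$ over $D(z,2r)$.

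\emph{Step 2 (disjointness on $C_0$, then uniqueness).} Applying Lemma \ref{lem:disjoint} to the measurable family $\{\gamma_x:x\in A\cap J_z\}$ yields pairwise disjointness over $D(z,r)$. Uniqueness at $x\in A\cap J_z$ is obtained by a swap-and-reapply trick: if $\gamma'$ were a second Fatou graph through $x$ over $D(z,2r)$, substituting $\gamma'$ for $\gamma_x$ in the family gives a new measurable family to which Lemma \ref{lem:disjoint} applies, forcing $\gamma'$ to be disjoint from every $\gamma_y$ with $y\neq x$ over $D(z,r)$; combining this with the injectivity $y\mapsto\gamma_y$ used in the proof of Lemma \ref{lem:disjoint} and the strict fiberwise expansivity of Lemma \ref{lem:expansive} gives a contradiction, so $\gamma'=\gamma_x$.

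\emph{Step 3 (extension to every $z\in C$).} Since the rotation of $p$ on $C$ is minimal, $C_0$ is dense in $C$. For arbitrary $z\in C$ and $x=(z,w)\in J_z$, use the lower semicontinuity of $\zeta\mapsto J_\zeta$ from \cite{jonsson} to find $z_k\in C_0$ with $z_k\to z$ and $x_k=(z_k,w_k)\in J_{z_k}$ with $x_k\to x$. The graphs $\gamma_{x_k}$ lie in the compact family $\mathcal{G}(2r)$, so any cluster value $\gamma_x$ is a Fatou graph through $x$ over $D(z,2r)$. Uniqueness and disjointness survive the passage to the limit by the same expansivity mechanism, since diameters of forward iterates remain controlled by $\delta/4$ while Lemma \ref{lem:expansive} forces separation in the limiting fiber as well.

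\emph{Main obstacle.} The genuinely delicate point is Step 2: Lemma \ref{lem:disjoint} controls only a single chosen measurable family, so it does not, by itself, rule out two distinct Fatou graphs through the same point. Upgrading to pointwise uniqueness forces the swap argument together with the strict separation provided by fiberwise expansivity; I expect this to be where the proof in the paper also does its most careful work.
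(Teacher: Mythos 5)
Your overall strategy is close to the paper's: ergodicity of $\mu_C$ for existence, the chain/expansivity mechanism of Lemmas \ref{lem:expansive}--\ref{lem:disjoint} for disjointness, and a swap of the family at the single point $x$ to attack uniqueness. That last idea is indeed what the paper does. But there is a genuine gap in your Step 2, precisely at the point you flag as delicate.

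After swapping $\gamma_x$ for $\gamma'$ and re-applying Lemma \ref{lem:disjoint}, you know that $\gamma'$ is disjoint from every $\gamma_y$ with $y\neq x$ over $D(z,r)$, and likewise for $\gamma_x$. This alone is not a contradiction: nothing so far prevents two distinct graphs through $x$ from each being disjoint from the rest of the family. You gesture at ``injectivity plus expansivity,'' but no contradiction follows from those two facts here; and if what you intend is to re-run the argument internal to Lemma \ref{lem:disjoint} (diffuseness and persistence of isolated intersections applied to the isolated intersection $\gamma'\cap\gamma_x=\{x\}$), then you would need a positive-$\mu_z$-measure set of $y$ with $\gamma_y$ close to $\gamma_x$ in the graph space --- i.e.\ that $\gamma_x$ lies in the support of $e_*\mu_z$ --- which is not guaranteed for a merely Borel selection $e$, nor for an arbitrary $x\in J_z$. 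The paper closes this gap with a Hurwitz argument that avoids any support condition: pick a Borel section $e$ (the ``measurable axiom of choice,'' \cite[Thm.~6.9.7]{bogachev}) normalized so that $e(x)=\gamma$, take any sequence $x_n\to x$ in $J_z$ with $x_n\neq x$ (possible because $J_z$ is perfect), and observe that since $e(x_n)\cap e(x)=\emptyset$ and $e(x_n)(z)\to w=\gamma(z)$, Hurwitz forces every cluster limit of $e(x_n)$ to equal $\gamma$; modifying $e$ only at $x$ so that $e(x)=\gamma'$ leaves $e(x_n)$ unchanged, and the same Hurwitz argument forces the cluster limits to also equal $\gamma'$, whence $\gamma=\gamma'$. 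This limit step is the missing piece in your proof.

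Two smaller remarks. In Step 1 you pull Fatou graphs \emph{backward} by inverse branches of $f^{n(x)}$; this is more delicate than it looks, since the vertical preimage can branch or fail to be a graph near the critical set. The paper instead pushes \emph{forward}: if $x\in L$ has a Fatou graph over $D(\pi_1(x),2r_0)$, then $f^n(x)$ has a Fatou graph over $p^n(D(\pi_1(x),2r_0))\supset D(p^n(\pi_1(x)),2r)$, and ergodicity gives $\mu_C(\bigcup_n f^n(L))=1$; no inverse branches are needed. Finally, your Step 3 asserts that ``uniqueness and disjointness survive passage to the limit by the same expansivity mechanism,'' but this is a hand-wave: the closure argument gives existence of \emph{some} Fatou graph at every $x\in J_z$, yet does not by itself rule out extra Fatou graphs appearing at limit points. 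The paper avoids the issue because, once existence on all of $J_z$ is secured, the Hurwitz/Borel-section uniqueness argument is run directly for \emph{every} $z\in C$ and \emph{every} $w\in J_z$, with no limiting step.
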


\begin{proof}
For the existence, we argue as in Lemma \ref{lem:annulus}. Indeed, by  the ergodicity of $\mu_C$ and the
definition of $r$, we infer that for $\mu_C$-a.e. $x = (z,w)$, there exists a Fatou disk through $x$
 that is a graph over $D(z,2r)$. By the 
 compactness  of the space of graphs, we extend this family to its closure, thus obtaining 
  for {every} $z\in C$ and every $w\in J_z$  a Fatou  graph   
 over $D(z,2r)$ containing $(z,w)$. 

\medskip

For the uniqueness, fix a Fatou graph $\gamma$ over $D(z, 2r)$ with $\gamma(z)  = w$. 
Recall that the measure 
$\mu_z\rest{L\cap (\set{z}\times \cc)}$ is diffuse, so its support has no isolated points. 
 For every $x\in J_z$, let $\mathcal{D}_x\subset \mathcal{G}_z(2r)$ be the set of Fatou graphs over $D(z,2r)$ 
 through $x$, which by assumption is non-empty. Being bounded and closed, $\mathcal{D}_x$ is compact.  
Put $\mathcal{D} = \bigcup_{x\in J_z} \mathcal{D}_x$, which is also compact.  

Consider the natural continuous map $\phi: \mathcal D \cv J_z$ defined by $\gamma\mapsto \gamma(z)$. The  
``measurable axiom of choice" (see \cite[Thm. 6.9.7]{bogachev} for the version that we use here) implies that this map admits 
 a Borel section, that is, an injective  Borel map $e$ such that  $\phi\circ e = \mathrm{Id}$. Without changing these properties we may 
 assume that $e(x)=\gamma$.
%
%
%
  By Lemma \ref{lem:disjoint}, the graphs $e(x)$, $x\in J_z$ are disjoint.  Now let $(x_n)$ be any sequence in $J_z$
  converging to $x$. Since for every $n$, $e(x_n)\cap e(x) = \emptyset$, by the Hurwitz theorem any cluster 
  limit of $e(x_n)$  must coincide with $e(x)= \gamma$. If $\gamma'$ is any other Fatou graph at $x$, we can freely modify $e$  
  so that $e(x) = \gamma'$, so the limit of $e(x_n)$ also equals $\gamma'$ and we conclude that $\gamma$ is unique, as desired. 
  
  The disjointness assertion of the lemma now follows directly from Lemma \ref{lem:disjoint}
\end{proof} 
 
%
%
%

 It follows from the previous lemma and the  $\Lambda$-lemma of Mañé, Sad, and Sullivan \cite{mss} that 
  $\bigcup_{x\in J_z} \gamma_x$ is a lamination in $D(z,r)\times \cc$. Likewise, a similar argument shows that 
 $\bigcup_{z\in C, x\in J_z} \gamma_x$ 
 forms a lamination over some neighborhood of $C$ (we will not need this result), however we cannot characterize its support nor
  even  show that it is contained in $J$. 
  
  Let us now conclude the proof  of Theorem \ref{thm:non quasi laminar} 
 similarly to   the laminar case (Step 3'). 
 Indeed by Lemma \ref{lem:connex}, 
there exists   $z\in C$ and  a  component   $V$ of  $ \mathrm{Crit}(f)$  through $x =(z,w)$ 
 such that $f(V)$   admits a non trivial intersection with a leaf $\Gamma$  of $E$ over $C$ (recall that $E$ is the holomorphic motion issued from the hyperbolic set $E_0$ on the central fiber). 
 Shifting $C$ slightly if needed, we may assume that 
 $\Gamma$ is not contained in $f( \mathrm{Crit}(f))$. Now consider the subvariety   $f^{-1}(\Gamma)$  near  $x$, which 
due to this     assumption  is of multiplicity 1. Looking at the equation of $f^{-1}(\Gamma)$ as in the laminar case, we conclude that there are two 
possibilities: either $f^{-1}(\Gamma)$ is smooth with a vertical tangency at $x$, or it is singular at $x$. 
In any case, $f^{-1}(\Gamma)$ admits an isolated intersection with $\gamma_x$ so 
by the lamination structure of the set of Fatou disks, 
there is a set of positive $\mu_z$ measure of $x'=(z,w')$ such that 
$f^{-1}(\Gamma)$ intersects  the graph $\gamma_{x'}$. 
Since $\Gamma$ is a Fatou graph,  the iterates 
 $f^n\rest{f^{-1}\Gamma}$, $n\geq 0$,  form a normal family. 
 Thus, arguing exactly as in Lemma \ref{lem:disjoint} we construct a set of positive 
measure $A \subset J_z$    such that if $y_1$, $y_2$ in $A$, $\dist(f^n(y_1), f^n(y_2))< \frac{3\delta}{4}$. 
This contradiction finishes the proof.  
\qed


\section{Further results and comments} \label{sec:further}

\subsection{Higher dimension} The laminarity problem makes sense in higher dimension as well. 
Given a holomorphic endomorphism of $\pk$ of degree $d$  with  Green current $T$, 
we define its $q^{\rm th}$ Julia set by 
$J_q = \supp(T^q)$. Then it may be asked whether for $1\leq q\leq k-1$,
$J_q\setminus J_{q+1}$ can be filled with Fatou disks of codimension $q$. We obtain a negative answer to this question by simply taking  the direct  product of the example $f$ of Theorem \ref{thm:non quasi laminar}
  with a polynomial map on $\cc^{k-2}$, for instance the monomial map 
$$g: (z_1,\ldots , z_{k-2}) \longmapsto (z_1^d,\ldots , z_{k-2}^d).$$ 
The  product map $F = (f,g)$ is a polynomial mapping on $\cc^k$ which extends to $\pp^k$ and its Green current is
 given by $T_F  = \pi_1^*T_f + \pi_2^* T_g$, where 
$$\pi_1:\cd\times \cc^{k-2}\cv \cd \text{ and } \pi_2:\cd\times \cc^{k-2}\cv \cc^{k-2}$$ are the natural projections. 

Let $\om\subset \cc^{k-2}$ be an open set such that $\om\cap J_{q-1}(g)\neq \emptyset$ and $\om\cap J_{q}(g)= \emptyset$. 
This happens for instance if in $\om$, $k-1-q$ coordinates have modulus smaller than 1 and the $q-1$ remaining 
 ones cross the unit circle.
 Then in  $\om$, $J_{q-1}(g)$ is foliated by Fatou disks of dimension $k-1-q$.  
 Now for the product map $F$, in $N\times \cc\times \om$ we have that 
 $J_q(F)  = J(f)\times J_{q-1}(g)$ and $J_{q+1}(F) = \emptyset$, and it is easy to see    that 
 $T_F^q =( \pi_1^*T_f) \wedge ( \pi_2^* T_g^{q-1})$ is not laminated 
 by Fatou disks of dimension $k-q$ (i.e. of codimension $q$).

\subsection{Another question of Forn\ae ss and Sibony}
In \cite{fs questions},  Forn\ae ss and Sibony ask the following question: if $f$ is an endomorphism of $\pk$,
 is it true that  the non-wandering set the 
closure of the union of the set of periodic points and of the set {\em Siegel varieties}? A Siegel variety is a (local) 
analytic subset $X$ such that  there exists  a subsequence $n_j$ such that $f^{n_j}\rest{X}$ converges to the identity. 
Notice that in the setting of our main theorem, $J\cap (N\times \cc)$ is contained in the non-wandering set. Indeed, since the dynamics is 
ergodic over every invariant circle, $(T\wedge idz\wedge d\overline z)$-almost every point is recurrent. 
Without solving this problem, Theorem \ref{thm:non quasi laminar} says at least that in $N\times \cc$, Siegel varieties 
 must be very small. For 
instance, the set of Siegel varieties which are graphs over    a given  invariant sub-annulus in $N$ is a nowhere dense set in $J$ 
of trace measure zero. Indeed, otherwise this would give rise to 
a set of positive trace measure of Fatou graphs, which is impossible, as the proof of Theorem \ref{thm:non quasi laminar} shows.

\end{document}